\documentclass[11pt, english, draft]{article}
\usepackage{amssymb,amsmath}
\setlength{\topmargin}{-17mm} \setlength{\oddsidemargin}{-0.3cm}
\textwidth=168mm \textheight=220mm
\title{\bf  OD-Characterization of Some Simple Unitary Groups}
\author{ {\bf Majid Akbari}, {\bf Xiaoyou Chen}\thanks{The second author was supported by Natural Key Fund of Education Department of Henan Province (No.17A110004),
Natural Funds of Henan Province (No.182102410049), Fund of Foreign Experts Affairs of Henan Province,
and National Natural Science Foundation of China (Nos.11571129, 11771356).} ,
{\bf Faisal Hassani} and {\bf Ali Reza Moghaddamfar}}

\newenvironment{proof}{\noindent {\em {Proof}}.}{$\square$
\medskip}
\newtheorem{theorem}{Theorem}[section]
\newtheorem{definition}[theorem]{Definition}
\newtheorem{corollary}[theorem]{Corollary}

\newtheorem{pro}[theorem]{Proposition}

\newtheorem{lm}[theorem]{Lemma}

\begin{document}
\newcommand{\f}{\frac}
\newcommand{\sta}{\stackrel}
\maketitle
\begin{abstract}
\noindent The degree pattern of a finite group is the degree sequence of its prime graph in ascending order of vertices. We say that the problem of OD-characterization is solved for a finite group if we determine  the number of pairwise nonisomorphic finite groups with the same order and degree pattern as the group under consideration. In this article the problem of OD-characterization is solved for some simple unitary groups.   It was shown, in particular,  that the simple unitary groups $U_3(q)$ and $U_4(q)$ are OD-characterizable, where $q$ is a prime power $<10^2$.
 \end{abstract}
\renewcommand{\baselinestretch}{1.1}
\def\thefootnote{ \ }
\footnotetext{{\em AMS subject Classification {\rm 2010}}:
20D05, 20D06, 20D08.\\[0.1cm]
\indent{\em \textbf{Keywords}}: OD-characterization of finite
group, prime graph, degree pattern, unitary simple group.}
\section{\sc Introduction}
Throughout this article, all the groups under consideration are
{\em finite}, and simple groups are {\em nonabelian}. Given a
group $G$, the {\em spectrum} $\omega(G)$ of $G$ is the set of orders of
elements in $G$. Clearly, the spectrum $\omega(G)$ is {\em closed} and
{\em partially ordered} by the divisibility relation, and hence it is
uniquely determined by the set $\mu(G)$ of its elements which are
{\em maximal} under the divisibility relation.

One of the most well-known graphs associated with a group  $G$ is the
{\em prime graph} (or {\em Gruenberg-Kegel graph}) denoted by ${\rm GK}(G)$.
In this graph, the vertex set is $\pi(G)$, the set of all prime divisors of the order of $G$, and two distinct
vertices $p$ and $q$ are joined by an edge (written by $p\sim
q$) if and only if $G$ contains an element of order  $pq$.  If $p_1<p_2<\cdots<p_k$ are all
vertices of ${\rm GK}(G)$, then the $k$-tuple:  $${\rm D}(G)=\left(d_G(p_1),
d_G(p_2), \ldots, d_G(p_k )\right),$$
is called the {\em degree pattern of} $G$, where $d_G(p_i)$ denotes the degree
of $p_i$ in ${\rm GK}(G)$.
 We denote by $\mathfrak{OD}(G)$
the set of pairwise non-isomorphic finite
groups with the same order and degree pattern as $G$, and put
$h(G)=|\mathfrak{OD}(G)|$. Since there are only
finitely many isomorphism types of groups of order $|G|$,  $1\leqslant h(G)<\infty$.
This leads to the following definition.
\begin{definition}{\rm
A group $G$ is called {\em $k$-fold OD-characterizable}, if $h(G)=k$. }\end{definition}

A $1$-fold OD-characterizable group is often simply called {\em OD-characterizable}, and it is called {\em quasi
OD-characterizable} if it is $k$-fold OD-characterizable for some
$k>1$. We say that for a group $G$ {\em the OD-characterization problem is solved} if we know the value of $h(G)$.

In recent years, a special attention has been paid toward the problem of  OD-characterization of simple and almost simple groups.  Table 8 at the end of this article lists simple groups which are currently known to be OD-characterizable or quasi
OD-characterizable. In particular, a list of finite simple unitary groups, for which the OD-characterization problem is solved, is given in Table 8. Based on the results summarized in that table, we see that $h(U_3(3))=1$ (\cite{SZ-2008}),
$h(U_3(4))=1$ (\cite{ZS-2009}), $h(U_3(5))=1$ (\cite{ZS-2010}),
$h(U_3(8))=1$ (\cite{ZS-2009}),
$h(U_3(17))=1$ (\cite{atmost17}),  $h(U_3(q))=1$ where $q>5$ is a prime power with  $|\pi((q^2-q+1)/(3,q+1))|=1$ (\cite{degree}),  $h(U_4(2))=2$ (\cite{moh, regular}), $h(U_4(3))=1$ (\cite{SZ-2008}),  $h(U_4(4))=1$ (\cite{atmost17}),  $h(U_4(5))=1$ (\cite{BAk(sub)}),  $h(U_4(7))=1$ (\cite{MAk.Rah}),  $h(U_4(8))=1$ (\cite{moh}), $h(U_4(17))=1$ (\cite{moh}).

In this article, we study the OD-characterization problem for the simple  unitary groups $U_3(q)$ and $U_4(q)$.
In particular, we prove that the simple groups $U_3(q)$ for $q=31$, $37$, $43$, $47$, $49$, $59$, $61$, $64$, $73$, $89$ and $97$, and  $U_4(q)$ for  $q$ a prime power, $9\leqslant q\leqslant  97$, are OD-characterizable. Combined with the above known results, this indicates that the following theorem is valid.\\[0.3cm]
{\bf Theorem.} {\em  The simple unitary groups $U_3(q)$ and $U_4(q)$, with $q<10^2$, are OD-characterizable.}\\[0.3cm]
\indent A few words about the contents. In this article the word graph will mean a finite simple undirected graph. The sets of vertices of a graph $\Gamma$ will be denoted by $V(\Gamma)$, and $\deg_{\Gamma}(v)$ (where $v\in V(\Gamma)$) will denote the degree of the vertex $v$ in $\Gamma$. 
The maximum degree of  $\Gamma$, denoted by $\Delta (\Gamma)$, and the minimum degree of $ \Gamma$, denoted by $\delta (\Gamma)$, are the maximum and minimum degree of its vertices. Given a group $G$,  for the sake of convenience, we write $\delta(G)=\delta({\rm GK}(G))$ and $\Delta(G)=\Delta({\rm GK}(G))$.  For a prime $p$, we denote by $\mathfrak{S}(p)$ the set of nonabelian finite simple groups $G$ such that $p\in \pi (G)\subseteq \{2, 3, 5, \ldots, p\}$. The {\em $r$-part} of a natural number $n$ means the largest power of a prime $r$ dividing $n$. We will often write $n_{(r)}$ for the $r$-part of $n$.
Denote by $s(G)$ the number of connected
components of ${\rm GK}(G)$ and by $\pi_i=\pi_i(G)$, $i=1, 2, \ldots, s(G)$,
the $i$th connected component of ${\rm GK}(G)$. If $G$ is a group of even
order, then we put $2 \in \pi_1(G)$.  Denote by $\omega_i(G)$ a set consisting of $n\in \omega (G)$ such that every prime divisor of $n$ lies in $\pi_i(G)$. It is now easy to see that the order of a group $G$ can be expressed as a product
of some coprime natural numbers $m_i(G)$, $i=1, 2, \ldots,
s(G)$, with $\pi(m_i)=\pi_i(G)$, where $\pi(m_i)$ signifies the set of all prime divisors of $m_i$.
The numbers $m_1, \ldots,
m_{s(G)}$ are called the {\it order components of} $G$.

The sequel of this article is organized as follows.
In Section 2, we give several auxiliary
results to be used later.  In Section 3,
we recall some basic results on certain finite simple groups, especially, on their spectra. Section 4 is devoted to the arithmetical structure of
unitary simple groups $U_n(q)$. Finally, the conclusions are discussed in Sections 5 and 6.


\section{\sc Preliminaries}
The concept of the prime graph was first introduced by Gruenberg and Kegel.  During their study on prime graph, they showed that a finite group with disconnected prime graph is either a Frobenius group or a $2$-Frobenius group, or has a unique nonabelian composition factor with disconnected prime graph. This result is published by Williams in \cite {wili}.
\begin{pro}[Gruenberg and Kegel]\label{prop2} {\rm (\cite {wili}, Theorem A)}
If $G$ is a finite group with disconnected graph ${\rm GK}(G)$, then one of the following occurs:
\begin{itemize}
\item[{\rm (1)}] $s(G) = 2$, $G$ is a Frobenius group.
\item[{\rm (2)}] $s(G) = 2$, $G = ABC$, where $A$ and $AB$ are normal subgroups of $G$, $B$ is a normal subgroup of $BC$,
and $AB$ and $BC$ are Frobenius groups.
\item[{\rm (3)}] $s(G)\geqslant 2$, there exists a simple group $P$ such that $P\leqslant G/K \leqslant {\rm  Aut}(P)$ for some nilpotent normal $\pi_1(G)$-subgroup $K$ of $G$, and $G/P$ is a $\pi_1(G)$-group. Moreover, ${\rm GK}(P)$ is disconnected, $s(P)\geqslant s(G)$, and for every $2\leqslant i \leqslant s(G)$, there exists $2\leqslant j\leqslant s(P)$, such that $\omega _i(G)=\omega_j(P)$.
\end{itemize}
\end{pro}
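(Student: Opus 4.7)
The plan is to stratify by whether $G$ is solvable, and in each case extract the structural dichotomy from the disconnectedness of $\mathrm{GK}(G)$. Fix components $\pi_1, \pi_2, \ldots, \pi_{s(G)}$ and set $\pi = \pi_1$, $\pi' = \pi(G) \setminus \pi$. The universal working tool is the observation that if primes $p,q$ lie in different components, then no $p$-element of $G$ commutes with any $q$-element; equivalently, every centralizer in $G$ is contained in a union $\pi_1 \cup \pi_i$ of components. This constraint is what ultimately forces Frobenius-type actions throughout.

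If $G$ is solvable, Hall's theorem provides a Hall $\pi$-subgroup $H$ and a Hall $\pi'$-subgroup $K$. The commuting constraint forces the action of one of $H, K$ on the other to be fixed-point-free in a suitable section, so by Thompson's theorem the relevant kernel is nilpotent. Running this argument on a minimal counterexample (or inducting on $|G|$) yields either a single Frobenius group $G = K \rtimes H$, giving case $(1)$, or a tower $A \triangleleft AB \triangleleft G = ABC$ with $AB$ and $BC$ both Frobenius, giving case $(2)$. The same bookkeeping rules out $s(G) > 2$ in the solvable situation, because the Frobenius kernel absorbs all of $\pi'$ into one component.

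In the non-solvable case I would introduce $K = O_{\pi_1}(G)$ and first verify that $K$ is nilpotent: each Sylow $r$-subgroup of $K$ for $r \in \pi_1$ is centralized by all other Sylow subgroups of $K$ by the commuting constraint inside $K$, forcing a direct-product structure. Passing to $\overline{G} = G/K$, any minimal normal subgroup must now be non-abelian, so $\mathrm{Soc}(\overline{G}) = P_1 \times \cdots \times P_t$ with each $P_i$ simple non-abelian. To get case $(3)$ I would force $t = 1$: otherwise, picking a prime $p$ in some $\pi_j$ with $j \geqslant 2$ dividing $|P_1|$, any non-identity element of $P_2$ would commute with a $p$-element via the direct product decomposition, contradicting the separation of the components. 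With $P := \mathrm{Soc}(\overline{G})$ simple, the embedding $P \leqslant \overline{G} \leqslant \mathrm{Aut}(P)$ is automatic, and all primes outside $\pi(P)$ lying in $\pi(K) \cup \pi(\overline{G}/P)$ must fall into $\pi_1$.

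The main obstacle is exactly this step: forcing the socle of $\overline{G}$ to be simple and simultaneously pinning down where the non-trivial components live. Once that is done, $s(P) \geqslant s(G)$ and the spectrum matching $\omega_i(G) = \omega_j(P)$ are essentially a direct comparison, since any element realizing a non-trivial component order in $G$ must project to an element of $P$ of the same order, and conversely $P$-elements lift through the nilpotent kernel $K$ without gaining incompatible prime divisors by coprimality of orders across components.
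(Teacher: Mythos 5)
The paper does not prove this proposition at all: it is quoted verbatim as Theorem A of Williams \cite{wili} (the Gruenberg--Kegel theorem), so the only honest comparison is with the published proof there. Measured against that, your sketch has a genuine gap at its central step. You claim that $K=O_{\pi_1}(G)$ is nilpotent because ``each Sylow $r$-subgroup of $K$ is centralized by all other Sylow subgroups of $K$ by the commuting constraint inside $K$.'' This is vacuous: every prime dividing $|K|$ lies in the single component $\pi_1$, so the disconnectedness of ${\rm GK}(G)$ imposes no non-adjacency whatsoever inside $K$, and normal $\pi$-subgroups of arbitrary groups are of course not nilpotent in general. The actual source of nilpotency is external to $K$: an element $x$ of prime order $q\in\pi_i$, $i\geqslant 2$, must act fixed-point-freely on $K$ (a nontrivial fixed point would make $q$ adjacent to a prime of $\pi_1$), and Thompson's theorem on fixed-point-free automorphisms of prime order then forces $K$ to be nilpotent. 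You invoke Thompson correctly in the solvable case but replace him in the non-solvable case by an argument that proves nothing.

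The later steps are also thinner than you suggest. To force ${\rm Soc}(G/K)$ to be simple you must first know that the primes of the components $\pi_i$, $i\geqslant 2$, actually divide the order of the socle rather than the order of $K$ or of the top quotient; this, together with the exclusion of abelian minimal normal subgroups of $G/K$ supported on a non-principal component, rests on the structure theory of Frobenius complements (and, for components of odd order, on the Feit--Thompson theorem), not on the bare commuting constraint. Likewise $\omega_i(G)=\omega_j(P)$ is not ``a direct comparison'': one must show that every $\pi_i$-element of $G$ with $i\geqslant 2$ lies over $P$ and that passing through the nilpotent kernel and the outer part changes no orders, which again uses the fixed-point-free action. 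As it stands the proposal is a plausible road map for the classical proof, but the step it identifies as routine is exactly where the real work lives, and the one justification it offers there is incorrect.
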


A {\em clique} of a graph is a set of mutually adjacent vertices, that is, its induced subgraph is complete.
In \cite{suz}, Suzuki studied the structure of the prime graph of a simple group, and showed that all connected components
of a disconnected prime graph are cliques, except the first connected component.
\begin{pro}[Suzuki]\label{prop1} {\rm (\cite {suz}, Theorem B)}  Let $P$ be a finite simple group with $s=s(P)>1$.
Then, the connected components $\pi_2(P), \pi_3(P), \ldots, \pi_s(P)$ are  cliques.
\end{pro}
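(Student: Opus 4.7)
The plan is to reduce the statement to the classification of finite simple groups with disconnected prime graph and then verify the clique property case by case. The starting point is the theorem of Williams \cite{wili}, complemented by the subsequent work of Kondrat'ev and of Iiyori-Yamaki, which enumerates exactly the finite simple groups $P$ with $s(P)>1$ and, for each such $P$, describes the partition of $\pi(P)$ into its components $\pi_1(P), \ldots, \pi_{s(P)}(P)$.

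Once that list is in hand, the task for each non-principal component $\pi_i(P)$ with $i\geqslant 2$ is to produce an element $x\in P$ whose order is divisible by every prime in $\pi_i(P)$; then any two such primes are joined in ${\rm GK}(P)$ by the edge witnessed by $x$, so $\pi_i(P)$ is a clique. Equivalently, one exhibits a $\pi_i$-Hall subgroup of $P$ which is nilpotent, so that a product of nontrivial elements drawn from each of its Sylow factors yields the desired $x$.

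Carrying this out splits into three families. For the alternating groups $A_n$ with disconnected prime graph, the components $\pi_i(P)$ for $i\geqslant 2$ consist of a single prime each, so the claim is vacuous. For the sporadic simple groups, the prime graph is small enough that the statement is verified directly from the ATLAS character tables. For the groups of Lie type $P=G(q)$, which constitute the substantive case, the non-principal components correspond to specific cyclotomic values $\Phi_d(q)$ dividing $|P|$; one then exhibits a maximal torus $T$ of $P$ whose order is divisible by $\Phi_d(q)$ and which is cyclic (or contains a cyclic subgroup) covering every prime in $\pi_i(P)$. Any generator of that cyclic subgroup serves as the required $x$.

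The main obstacle is the case-by-case verification for the Lie type groups, especially the exceptional ones such as ${}^2F_4(q)$, $E_7(q)$ and $E_8(q)$, where the maximal tori must be identified from the orbits of the Weyl group on cocharacters and one must make sure that a single cyclic torus — rather than an unrelated collection of them — captures all the primes in a given non-principal component. For the classical groups the verification is essentially uniform, following from the well-understood correspondence between components of ${\rm GK}(P)$ and cyclotomic factors of $|P|$.
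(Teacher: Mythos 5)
The paper does not prove this proposition at all: it is quoted as Theorem~B of Suzuki \cite{suz}, so there is no internal argument to compare against, and your proposal must be judged on its own. Your route --- invoke the classification (via Williams, Kondrat'ev, Iiyori--Yamaki) of the finite simple groups with disconnected prime graph together with their component decompositions, and then exhibit, for each non-principal component, a nilpotent Hall subgroup or a cyclic maximal torus covering all of its primes --- is workable in principle, and the reductions you make (singleton components for $\mathbb{A}_n$, direct inspection for sporadic groups, cyclic tori of order $\Phi_d(q)$ for Lie type) are the right ones for that strategy. But as written it is a plan rather than a proof: the entire mathematical content sits in the Lie-type verification, and you explicitly flag the exceptional groups ${}^2F_4(q)$, $E_7(q)$, $E_8(q)$ as an unresolved ``obstacle'' while dismissing the classical groups as ``essentially uniform'' without carrying out the check. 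That deferred verification is not a routine detail; it is the theorem.

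More importantly, you are missing the uniform argument that makes the whole case analysis unnecessary and that is the one underlying Suzuki's treatment and the Gruenberg--Kegel theory already quoted as Proposition~\ref{prop2}. For any finite group $G$ with disconnected prime graph and any component $\pi_i(G)$ with $i\geqslant 2$, Williams \cite{wili} (following Gruenberg--Kegel) shows that $G$ possesses a nilpotent isolated Hall $\pi_i$-subgroup $H_i$ with $\pi(H_i)=\pi_i(G)$. A nilpotent group is the direct product of its Sylow subgroups, so for any two distinct primes $p,q\in\pi(H_i)$ one may multiply commuting elements of orders $p$ and $q$ to obtain an element of order $pq$; hence $p\sim q$ in ${\rm GK}(G)$ and $\pi_i(G)$ is a clique. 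This argument is classification-free, applies to all finite groups rather than only simple ones, and replaces your entire case-by-case program with a few lines. If you nevertheless wish to keep the CFSG-based route, you should at least acknowledge that the component tables you would import are themselves derived from spectrum computations in which the clique structure of the non-principal components is already established, so the added value of re-verifying it torus by torus is limited.
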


Now, suppose $G$ is an arbitrary group. Then, Propositions \ref{prop2} and \ref{prop1} show that the prime
graph of a group $G$ has the following structure:
\begin{equation}\label{e1} {\rm GK}(G)={\rm GK}[\pi_1]\oplus K_{n_2}\oplus \cdots\oplus K_{n_s},\end{equation}
where ${\rm GK}[\pi_1]$ denotes the induced subgraph ${\rm
GK}(G)[\pi_1(G)]$, $n_i=|\pi_i(G)|$ and $s=s(G)$.  It follows from Eq. (\ref{e1}) that:
\begin{itemize}
\item  if $p\in \pi_{i}(G)$, $i=2, \ldots, s$, then $d_G(p)=n_i-1$, and
\item if $p\in \pi_{1}(G)$, then $d_G(p)\leqslant |\pi_1(G)|-1$.
\end{itemize}
Hence, the degree sequence of the prime graph ${\rm GK}(G)$ of $G$ contains $n_i$ times the degree $n_i-1$, $2\leqslant i\leqslant s$, and $|\pi_1(G)|$ vertices of degree at most $|\pi_1(G)|-1$.  Following \cite{four}, for an integer $m\geqslant 0$, we define:  $$D_m(G):=\{p\in \pi(G) \ | \ d_G(p)=m\}.$$ Since ${\rm GK}(G)$ is a simple graph,  $0\leqslant m\leqslant |\pi(G)|-1$.
Some information on the degree pattern of $G$ can be obtained immediately through the number of elements in $D_m(G)$ for some $m$.  For instance, we have:
\begin{itemize}
\item[{\rm (1)}] $m=0$. Clearly, $p\in D_0(G)$ if and only if $\{p\}$ is a connected component of ${\rm GK}(G)$, so $|D_0(G)|\leqslant s(G) \leqslant 6$ (see \cite{wili}).
\item[{\rm (2)}] $m=|\pi(G)|-1$. If $D_m(G)\neq \emptyset$,  then ${\rm GK}(G)$ is connected.
\item[{\rm (3)}] $m=|\pi_i(G)|-1$ for $i\geqslant 2$.  It follows from Eq. (\ref{e1}) that  $|D_{m}(G)|\geqslant m+1$.
\end{itemize}

Finally, an immediate consequence of Eq. (\ref{e1}) is the following.

\begin{corollary}\label{coro1}
If $|D_m(G)|\leqslant m$ for some $m$, then $D_m(G)\subseteq \pi_1(G)$.  In particular, if
for every $m$,  $|D_m(G)|\leqslant m$, then ${\rm GK}(G)$ is connected.
\end{corollary}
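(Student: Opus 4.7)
The corollary falls out directly from the structural decomposition~\eqref{e1}, so the plan is quite short. The first claim I would prove by contrapositive: assume $D_m(G)$ contains a prime $p$ that is not in $\pi_1(G)$, say $p\in \pi_i(G)$ for some $i\geqslant 2$. From the bullet just above the statement, every vertex of $\pi_i(G)$ has degree $n_i-1$ in ${\rm GK}(G)$, so the hypothesis $p\in D_m(G)$ forces $m=n_i-1$. But then, again by the same bullet, \emph{every} prime in $\pi_i(G)$ sits in $D_m(G)$, giving $|D_m(G)|\geqslant n_i=m+1>m$, contradicting $|D_m(G)|\leqslant m$. Hence $D_m(G)\subseteq \pi_1(G)$.

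For the ``In particular'' part I would argue by contradiction: suppose ${\rm GK}(G)$ is disconnected, so $s=s(G)\geqslant 2$ and there is at least one clique component $\pi_2(G)$ of size $n_2\geqslant 1$. Setting $m=n_2-1$, the same bullet yields $\pi_2(G)\subseteq D_m(G)$, so $|D_m(G)|\geqslant n_2=m+1$, which violates the assumption $|D_m(G)|\leqslant m$ for every $m$. Therefore ${\rm GK}(G)$ must be connected.

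There is no real obstacle here: the whole argument is a one-step unpacking of Eq.~\eqref{e1} together with the observation that a vertex of $\pi_i(G)$ ($i\geqslant 2$) has degree exactly $n_i-1$, which forces $\pi_i(G)\subseteq D_{n_i-1}(G)$ and hence $|D_{n_i-1}(G)|\geqslant n_i$. The only minor point worth stating cleanly in the write-up is that we are implicitly using Proposition~\ref{prop1} (via the decomposition~\eqref{e1}) to know that each non-first component is a clique, which is precisely what guarantees the uniform degree $n_i-1$ inside $\pi_i(G)$.
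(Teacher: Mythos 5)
Your proof is correct and follows exactly the route the paper intends: the corollary is stated there as an immediate consequence of Eq.~(\ref{e1}) via the observation that each non-first component $\pi_i(G)$ ($i\geqslant 2$) is a clique whose vertices all have degree $n_i-1$, forcing $|D_{n_i-1}(G)|\geqslant n_i$, which is precisely the contradiction you derive. No gaps; your write-up simply makes explicit what the paper leaves as ``immediate.''
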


Generally, in a graph a set of vertices is {\em independent} if no two vertices in the set are adjacent. The {\em independence number} $\alpha(\Gamma)$ of a graph $\Gamma$ is the maximum cardinality of an independent set of vertices in $\Gamma$. Given a group $G$,  we put $t(G)=\alpha({\rm GK}(G))$ and denote  by $t(r, G)$ the maximal number of prime divisors of $G$ containing $r$ that are pairwise nonadjacent in  ${\rm GK}(G)$. 
\begin{lm}\label{vasi}{\rm (\cite{vasi})} Let $G$ be a group with
$t(G)\geqslant 3$ and $t(2, G)\geqslant 2$, and let $K$ be the
maximal normal solvable subgroup of $G$. Then, there exists a
simple group $P$ such that $P\leqslant G/K\leqslant {\rm Aut} (P)$.
\end{lm}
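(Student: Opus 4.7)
The plan is to study the structure of $\bar G := G/K$ through its socle. Since $K$ is the maximal normal solvable subgroup, the solvable radical of $\bar G$ is trivial, so every minimal normal subgroup of $\bar G$ is a direct product of isomorphic nonabelian simple groups. Consequently the socle $S := {\rm Soc}(\bar G)$ has the form $S = P_1 \times \cdots \times P_k$ with each $P_i$ nonabelian simple, and the general property $C_{\bar G}(F^*(\bar G)) \leqslant F^*(\bar G)$, together with $Z(S) = 1$, gives $C_{\bar G}(S) = 1$. Hence the conjugation action of $\bar G$ on $S$ is faithful and $S \leqslant \bar G \leqslant {\rm Aut}(S)$. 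The lemma is proved once I show $k = 1$, in which case $P := P_1$ works.

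To force $k = 1$, I would combine two ingredients. First, a lifting principle: since $K$ is solvable and normal, the Schur--Zassenhaus theorem lifts any element of $\bar G$ of order coprime to $|K|$ to an element of $G$ of the same order. In particular, for primes $p, q \in \pi(G) \setminus \pi(K)$, an edge $p \sim q$ in ${\rm GK}(\bar G)$ forces $p \sim q$ in ${\rm GK}(G)$. Second, if $k \geqslant 2$, then for $i \ne j$ commuting elements in distinct factors produce elements of order $pq$ for every $p \in \pi(P_i)$ and $q \in \pi(P_j)$; by the Feit--Thompson theorem each $\pi(P_i)$ contains $2$, so the prime $2$ is adjacent in ${\rm GK}(G)$ to every prime of $\pi(S) \setminus \pi(K)$. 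The hypotheses $t(G) \geqslant 3$ and $t(2, G) \geqslant 2$ therefore push the non-neighbours of $2$, and in particular a non-neighbour $r$ of $2$, into $\pi(K)$, and a pigeon-hole argument (three pairwise nonadjacent primes cannot fit inside a single $\pi(P_i)$ while simultaneously avoiding all the forced cross-factor edges to $2$) rules out $k \geqslant 2$.

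The main obstacle is handling primes in $\pi(K)$, for which Schur--Zassenhaus produces no direct edges in ${\rm GK}(G)$. For a prime $r \in \pi(K)$ with $r \not\sim 2$, I would analyse the action of a Sylow $2$-subgroup of $\bar G$ (lifted through the extension $K \trianglelefteq G$) on an $r$-chief factor of $K$; by coprime action and the Thompson $A \times B$-lemma this either yields an element of order $2r$, contradicting $r \not\sim 2$, or forces a non-trivial $2$-subgroup of $S$ to centralise the $r$-chief factor. The latter conclusion, played against $C_{\bar G}(S) = 1$ and the structure of the direct factors $P_i$, is what eventually produces the required contradiction with $k \geqslant 2$ and delivers $k = 1$.
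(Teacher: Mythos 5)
The paper offers no proof of this lemma: it is quoted verbatim from Vasil\'ev and Gorshkov \cite{vasi}, so your attempt has to be measured against their published argument. Your setup is the correct one and matches theirs: pass to $\bar G=G/K$, note that the solvable radical of $\bar G$ is trivial, so $S=F^*(\bar G)={\rm Soc}(\bar G)=P_1\times\cdots\times P_k$ with the $P_i$ nonabelian simple and $C_{\bar G}(S)=1$, whence $S\leqslant\bar G\leqslant{\rm Aut}(S)$; the Schur--Zassenhaus lifting of edges between primes outside $\pi(K)$ is likewise a correct and standard ingredient. But the reduction to $k=1$ --- which is the entire content of the lemma --- is not actually carried out, and several of the steps you describe fail as stated.

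Concretely: (i) you never rule out $k=0$, i.e.\ you never show $G\neq K$; this needs the Hall--Higman observation that a solvable group has prime graph with independence number at most $2$, so that $t(G)\geqslant 3$ forces $G$ nonsolvable. (ii) Your dichotomy for a nonneighbour $r$ of $2$ is ``$r\in\pi(K)$ or $r\in\pi(S)\setminus\pi(K)$'', but a third case occurs and is genuinely present in the real proof: $r$ may divide $|\bar G|$ while dividing neither $|S|$ nor $|K|$ (field-automorphism primes, e.g.\ $5\in\pi({\rm Aut}(L_2(32)))\setminus\pi(L_2(32))$); handling it requires showing that an element of odd prime order in $\bar G\setminus S$ centralizes an element of even order of $S$, which rests on nontrivial facts about automorphisms of simple groups and on diagonal centralizers for factor-permuting elements. (iii) The cross-factor argument tacitly assumes $2\notin\pi(K)$, since otherwise the edge $2\sim p$ in ${\rm GK}(\bar G)$ does not lift to ${\rm GK}(G)$. (iv) The ``pigeonhole'' step proves nothing: two or even three pairwise nonadjacent primes can all lie in $\pi(P_1)$ for a single factor (simple groups with $t\geqslant 3$ abound), so no contradiction with $k\geqslant 2$ is obtained from the forced edges alone. (v) The closing paragraph, which is supposed to dispose of the hard case $r\in\pi(K)$ with $r\nsim 2$, ends with ``is what eventually produces the required contradiction''; that missing piece is exactly the Hall--Higman-type analysis (fixed-point-free action of a Sylow $2$-subgroup on $r$-chief factors forcing cyclic or generalized quaternion Sylow $2$-subgroups, then Gorenstein--Walter and Brauer--Suzuki) that occupies the bulk of the cited paper. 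As it stands, your proposal is a reasonable outline of the skeleton of the known proof, not a proof.
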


\begin{lm}\label{31-97}
Let $G$ be a finite simple group of Lie type. Suppose that  $r\in \pi(G)\subseteq \pi(U_4(q))$, where $q$ is a prime power, and  $r$ and $q$ satisfy one of the following conditions. Then $G$ is isomorphic to one of the following simple groups in each case:
\begin{itemize}
\item[{\rm (1)}] $q=49$, $r=1201:$    $L_2(7^4)$, $B_2(7^2)$, $U_4(7^2)$.
\item[{\rm (2)}]  $q=59$, $r=1741:$    $L_2(59^2)$,  $B_2(59)$, $U_4(59)$.
\item[{\rm (3)}]  $q=61$, $r=1861:$  $L_2(61^2)$,  $B_2(61)$, $U_4(61)$.
\item[{\rm (4)}] $q=67$, $r=4423:$ $U_3(67)$,  $U_4(67)$.
\item[{\rm (5)}] $q=71$, $r=2521:$ $L_2(71^2)$,  $B_2(71)$, $U_4(71)$.
\item[{\rm (6)}] $q=79$, $r=6163:$   $U_3(79)$, $U_4(79)$.
\item[{\rm (7)}]  $q=81$, $r=6481:$  $U_3(3^4)$, $U_4(3^4)$.
\item[{\rm (8)}] $q=83$, $r=2269:$  $U_4(83)$.
\end{itemize}
\end{lm}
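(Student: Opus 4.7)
The plan is to treat each of the eight cases separately, following the same three-step template driven by the fact that $r$ is a very large prime lying in exactly one cyclotomic piece of $|U_{4}(q)|$. First, for each $q$ I would spell out $\pi(U_{4}(q))$ using
\[
|U_{4}(q)|=\frac{1}{(4,q+1)}\,q^{6}(q-1)^{2}(q+1)^{3}(q^{2}+1)(q^{2}-q+1),
\]
and locate $r$. A direct check shows that in cases (1), (2), (3), (5) one has $r\mid q^{2}+1$ (for example $1201\mid 49^{2}+1$ and $1741\mid 59^{2}+1$), whereas in cases (4), (6), (7), (8) one has $r\mid q^{2}-q+1$ (for example $4423=67^{2}-67+1$ and $6481=81^{2}-81+1$). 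Writing $q=p^{a}$ with $p$ the characteristic, this pins down the multiplicative order of $p$ modulo $r$ as $4a$ in the first group of cases and $6a$ in the second, so $r$ is a Zsigmondy primitive prime divisor of $p^{4a}-1$ or of $p^{6a}-1$.

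Second, I would enumerate all finite simple groups $G$ of Lie type with $r\mid |G|$. Since $r$ is much larger than the defining characteristic of any target in the list, the condition $r\mid |G|$ forces $r\mid \Phi_{m}(q_{0})$ for some cyclotomic factor $\Phi_{m}$ appearing in the order of $G=G(q_{0})$. Combined with the already-fixed order of $p$ modulo $r$, this forces the characteristic of $G$ to equal $p$ and restricts the pair $(m,a_{0})$, with $q_{0}=p^{a_{0}}$, to a short list. A comparison of $r$ with the standard Zsigmondy bound then controls the rank of $G$ sharply and rules out every exceptional family as well as every classical family of rank bigger than three, leaving only a handful of small-rank classical candidates over small extensions of $\mathbb{F}_{p}$.

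Third, for each surviving candidate I would check the containment $\pi(G)\subseteq \pi(U_{4}(q))$ directly by factoring $|G|$ and testing each prime divisor against the list produced in step one. Candidates such as $L_{3}(q^{2})$ or $S_{4}(q^{2})$ are typically ruled out because their orders contain cyclotomic factors, e.g.\ $q^{2}+q+1$ or $q^{4}+1$, which do not divide $|U_{4}(q)|$, while $L_{2}(q^{2})$, $B_{2}(q)$, $U_{3}(q)$ and $U_{4}(q)$ survive precisely because all of their cyclotomic factors already appear in $|U_{4}(q)|$. The main obstacle is bookkeeping rather than depth: one must be systematic across all classical and exceptional families for each of the eight pairs $(q,r)$, track the small-group coincidences carefully, and distinguish primitive prime divisors of $q^{k}-1$ from those of $p^{k}-1$.
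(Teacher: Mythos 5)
Your plan follows essentially the same route as the paper's proof: identify $r$ as a primitive prime divisor of $p^{4a}-1$ (cases where $r\mid q^2+1$) or of $p^{6a}-1$ (cases where $r\mid q^2-q+1$), use the multiplicative order of the defining characteristic modulo $r$ together with the Bang--Zsigmondy theorem to cut the groups of Lie type down to a short list of small-rank classical groups over extensions of $\mathbb{F}_p$, and then test $\pi(G)\subseteq\pi(U_4(q))$ on the survivors. The one point where your justification does not hold up as written is the assertion that the ``already-fixed order of $p$ modulo $r$'' forces the characteristic of $G$ to equal $p$. Knowing $e(r,p)$ says nothing about the other possible characteristics: a priori the defining characteristic $\ell$ of $G$ can be any prime in $\pi(U_4(q))$, including $r$ itself, and ruling these out is the bulk of the actual work. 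What eliminates $\ell\neq p$ is the separate computation of $e(r,\ell)$ for each candidate $\ell$: for instance, for $q=49$ and $r=1201$ one finds that the order of $2$ modulo $1201$ is $300$ and the least $k$ with $2^k+1\equiv 0\pmod{1201}$ is $150$, so a group of Lie type in characteristic $2$ whose order is divisible by $1201$ would have $2^k\pm 1$ with $k\geqslant 150$ dividing its order, and Zsigmondy then forces $\pi(G)$ to contain primes outside the seven-element set $\pi(U_4(49))$; the case $\ell=r$ is discarded similarly. This is precisely the case analysis the paper carries out for each non-defining characteristic, and your step two needs it spelled out; once it is added, the remainder of your argument (restricting $(m,a_0)$ for characteristic $p$ and checking the surviving candidates against $\pi(U_4(q))$) matches the paper's and goes through.
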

\begin{proof} Since the proofs of $(1)$--$(8)$ are similar, only
the proof for $(1)$ is presented.
Suppose that $G=L(q')$ is a finite simple group of Lie type over the finite field of order $q'=p^n$, where $p$ is a prime and $n$ is a natural number, such that  $1201\in \pi(G)\subseteq \pi(U_4(49))=\{2, 3, 5, 7, 13, 181, 1201\}$. Since $p\in \pi(G)\subseteq \{2, 3, 5, 7, 13, 181, 1201\}$,  we consider three cases separately. 

\begin{itemize}
\item[{\rm (1)}]  $p\in \{2, 3, 5, 13, 181\}$. If $p=2$, then one can easily check that  the order of $2$ modulo $1201$ is $300$, 
and the least integer $k$ for which $2^k+1\equiv 0\pmod{1201}$ is
150. Thus,  if $1201|2^k-1$ (for which $2^k-1$ divides $|G|$), then $k$ must be a multiple of $300$. Thus, in view of the list of the groups of Lie type together with their orders (see \cite{atlas}), no candidates for $G$ will arise.  Similarly, for $p\in \{3, 5, 13, 181\}$ we do not get a group. 
\item[{\rm (2)}]  $p=7$. In this case, the order of $7$ modulo $1201$ is $8$, and the least integer $k$ for which $7^k+1\equiv 0\pmod{1201}$ is
4,  and similar consideration will give the groups $G = L_2(7^4), B_2(7^2)$ and $U_4(7^2)$. 

\item[{\rm (3)}]  $p=1201$. In this case $q'$ must be a power of $1201$ and again we do not get a group. 
\end{itemize}
The lemma is proved.
\end{proof} 

Let $\Gamma$ be a simple graph with vertex set $V(\Gamma)=\{v_1, v_2, \ldots, v_n\}$ in any order. Put $d_i=\deg_{\Gamma} (v_i)$, $1\leqslant i \leqslant n$. The  sequence $D(\Gamma)=(d_1, d_2, \ldots, d_n)$  is called the {\em degree sequence} of $\Gamma$.
A nonnegative sequence of integers $(x_1, x_2, \ldots, x_n)$  is said to be {\em graphic} if there exists a graph $\Gamma$ with  $D(\Gamma)=(x_1, x_2, \ldots, x_n)$, and then the graph $\Gamma$ is called a {\em realization}. Two graphs with the same degree sequence are said to be {\em degree equivalent}. For example, the prime graphs
${\rm GK}(U_4(71))$ and ${\rm GK}(U_4(79))$ are degree equivalent (see Table 6). The following result is immediate.
\begin{lm}\label{coro2}  Let $D=(d_1, d_2, \ldots, d_n)$ be a graphic sequence.
Assume that $\Gamma$ is a realization of $D$ with a vertex set $V(\Gamma)=\{v_1, v_2, ..., v_n\}$ such that $\deg (v_i)=d_i$ for $1\leqslant  i\leqslant n$.  If the sequence $$D_{i,j}=(d_1, d_2, \ldots, d_{i-1}, d_i-1, d_{i+1}, \ldots, d_{j-1}, d_j-1, d_{j+1}, \ldots, d_n), \ \ (1\leqslant i<j\leqslant n),$$
is not graphic, then $v_i$ and $v_j$ are nonadjacent in $\Gamma$.
\end{lm}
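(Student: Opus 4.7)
The plan is to argue by contrapositive: I would show that if $v_i$ and $v_j$ \emph{were} adjacent in the realization $\Gamma$, then $D_{i,j}$ would necessarily be graphic, contradicting the hypothesis. So the proof is essentially a one-step construction rather than anything delicate.

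More concretely, I would start by supposing, towards a contradiction, that $v_i v_j$ is an edge of $\Gamma$. Let $\Gamma'$ be the graph obtained from $\Gamma$ by deleting that single edge (keeping the same vertex set). Removing an edge decreases the degree of each of its two endpoints by exactly $1$ and leaves the degrees of all other vertices unchanged. Hence, listing the degrees of $\Gamma'$ in the order $v_1,\dots,v_n$ gives precisely
\[
(d_1,\ldots,d_{i-1},d_i-1,d_{i+1},\ldots,d_{j-1},d_j-1,d_{j+1},\ldots,d_n)=D_{i,j}.
\]
Thus $\Gamma'$ is a realization of $D_{i,j}$, so $D_{i,j}$ is graphic, contradicting our assumption. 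Therefore $v_i$ and $v_j$ must be nonadjacent in $\Gamma$.

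There is no real obstacle here: the statement is essentially the formal write-up of the elementary observation that edge-deletion decreases the degrees of the two endpoints by one. The only subtlety worth noting is that the lemma is not an ``if and only if''—$D_{i,j}$ being graphic does not force the edge $v_iv_j$ to exist in $\Gamma$, since it might be realized by a different graph. The lemma only uses one direction, which is what makes it a useful nonadjacency test when applied later (for instance, to distinguish prime graphs of the degree-equivalent groups $U_4(71)$ and $U_4(79)$ mentioned before the statement).
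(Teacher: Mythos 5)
Your proof is correct, and it is exactly the edge-deletion argument the authors have in mind: the paper offers no written proof, simply declaring the lemma ``immediate,'' and the contrapositive you spell out (deleting the edge $v_iv_j$ yields a realization of $D_{i,j}$) is the one-line justification behind that claim. Your remark that the converse fails is also accurate and consistent with how the lemma is used later as a one-directional nonadjacency test.
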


\begin{lm}\label{connect}
Let $\Gamma$ be a graph. If $\Delta (\Gamma)+\delta(\Gamma)\geqslant |V(\Gamma)|-1$, then $\Gamma$ is connected.
\end{lm}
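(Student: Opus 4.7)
The plan is to argue by contradiction: assume $\Gamma$ is disconnected and derive the opposite inequality $\Delta(\Gamma)+\delta(\Gamma)\leqslant |V(\Gamma)|-2$. The idea is that a vertex of large degree pins down the size of one connected component, while any vertex lying outside that component is forced to have small degree.

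More concretely, I would pick a vertex $v$ realizing the maximum degree, that is, $\deg_\Gamma(v)=\Delta(\Gamma)$, and let $C$ be the connected component of $\Gamma$ that contains $v$. All neighbors of $v$ lie in $C$, so $|V(C)|\geqslant \Delta(\Gamma)+1$. Because $\Gamma$ is assumed disconnected, there exists some vertex $w\in V(\Gamma)\setminus V(C)$. All neighbors of $w$ lie in $V(\Gamma)\setminus V(C)$, hence
\[
\delta(\Gamma)\leqslant \deg_\Gamma(w)\leqslant |V(\Gamma)|-|V(C)|-1\leqslant |V(\Gamma)|-\Delta(\Gamma)-2,
\]
which rearranges to $\Delta(\Gamma)+\delta(\Gamma)\leqslant |V(\Gamma)|-2$, contradicting the hypothesis $\Delta(\Gamma)+\delta(\Gamma)\geqslant |V(\Gamma)|-1$. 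Therefore $\Gamma$ must be connected.

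There is essentially no obstacle here: the whole proof is a two-line counting argument once one realizes that the maximum-degree vertex traps at least $\Delta(\Gamma)+1$ vertices inside its component, leaving at most $|V(\Gamma)|-\Delta(\Gamma)-1$ vertices for every other component, which caps the degree of any vertex outside. The only subtlety is to present the inequalities cleanly and to note that the existence of a vertex $w$ outside $C$ is guaranteed precisely by the assumption of disconnectedness.
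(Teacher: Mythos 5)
Your proof is correct and follows essentially the same argument as the paper: both locate a maximum-degree vertex to force its component to contain at least $\Delta(\Gamma)+1$ vertices, then bound the degree of any vertex in a different component to reach a contradiction. The only cosmetic difference is that you rearrange the final inequality into $\Delta(\Gamma)+\delta(\Gamma)\leqslant |V(\Gamma)|-2$ rather than collapsing the chain to $\delta(\Gamma)<\delta(\Gamma)$ as the paper does.
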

\begin{proof}  Assume the contrary and let $C_1, C_2, \ldots, C_s$ be the connected components of
$\Gamma$, where $s\geqslant 2$. Let $p\in C_i$ be a vertex of degree $\Delta (\Gamma)$. This forces $|C_i|\geqslant |\Delta(\Gamma)|+1$.
Since $\Gamma$ is disconnected, we may choose $q\in C_j$ for $j\neq i$.
Then, we have  $$\delta(\Gamma)\leqslant \deg (q)< |C_j|\leqslant |V(\Gamma)|-|C_i|\leqslant \Delta (\Gamma)+\delta(\Gamma)+1-\Delta (\Gamma)-1=\delta(\Gamma),$$  and this is a contradiction.
\end{proof}
\begin{lm}\label{lemma3.6}
Let $u, v$ be two nonadjacent vertices of a graph $\Gamma$.
If  $\deg_{\Gamma}(u)+ \deg_{\Gamma}(v)\leqslant  |V(\Gamma)|-3$,
then $\alpha(\Gamma)\geqslant 3$.
\end{lm}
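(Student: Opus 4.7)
The plan is to exhibit a third vertex $w$ so that $\{u,v,w\}$ is an independent set. Since $u$ and $v$ are given to be nonadjacent, the pair $\{u,v\}$ is already independent, so the entire task reduces to locating one vertex $w$ in $V(\Gamma) \setminus \{u,v\}$ that is nonadjacent to both $u$ and $v$.

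The natural way to guarantee the existence of such a $w$ is a simple counting argument. Let $n = |V(\Gamma)|$ and set $N(u)$, $N(v)$ to be the neighborhoods of $u$ and $v$ respectively. Because $u$ and $v$ are nonadjacent, $u \notin N(v)$ and $v \notin N(u)$, so $N(u) \cup N(v) \subseteq V(\Gamma) \setminus \{u,v\}$, a set of cardinality $n-2$. The vertices that are \emph{bad} for our purposes (i.e. those adjacent to $u$ or to $v$) number at most
\[
|N(u) \cup N(v)| \leqslant \deg_{\Gamma}(u) + \deg_{\Gamma}(v) \leqslant n - 3,
\]
by hypothesis. Thus the number of candidates for $w$, namely $|V(\Gamma) \setminus (\{u,v\} \cup N(u) \cup N(v))|$, is at least $(n-2) - (n-3) = 1$.

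Consequently there exists at least one vertex $w \in V(\Gamma) \setminus \{u,v\}$ that is adjacent to neither $u$ nor $v$. The set $\{u,v,w\}$ is then pairwise nonadjacent, hence an independent set of size $3$, which gives $\alpha(\Gamma) \geqslant 3$. There is no real obstacle here: the argument is a one-line inclusion–exclusion bound on the forbidden neighborhoods, and the hypothesis $\deg_{\Gamma}(u)+\deg_{\Gamma}(v) \leqslant |V(\Gamma)|-3$ is calibrated exactly so that at least one admissible third vertex survives.
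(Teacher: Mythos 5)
Your argument is correct and is exactly the pigeonhole/counting argument the paper invokes when it says the lemma ``follows immediately from the pigeonhole principle''; you have simply written out the details. No issues.
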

\begin{proof} This follows immediately from the pigeonhole principle.
\end{proof}


\section{\sc The Structure of $U_n(q)$}
We collect here some information about the unitary groups over a finite field.
The general unitary group ${\rm GU}_n(q)$ is defined as
$${\rm GU}_n(q)=\{A\in {\rm GL}_n(q^2) \  |  \ A\bar{A}^T=I_n\},$$
which is a subgroup of ${\rm GL}_n(q^2)$.  Here $\bar{A}$ denotes the matrix obtained from $A$ by raising each entry to the $q$th power.
The special unitary group ${\rm SU}_n(q)$ is the subgroup
of all matrices in ${\rm GU}_n(q)$ with determinant $1$.
 The orders of ${\rm GU}_n(q)$ and ${\rm SU}_n(q)$ are, respectively,
$$|{\rm GU}_n(q)|=q^{n\choose 2}\prod_{i=1}^{n}(q^i-(-1)^i)  \ \ \ {\rm and} \ \ \ |{\rm SU}_n(q)|=q^{n\choose 2}\prod_{i=2}^{n}(q^i-(-1)^i).$$
The center $Z$ of ${\rm SU}_n(q)$ is the set of scalar matrices $\lambda I$ for which both $\lambda^n=1$ and $\lambda^{q+1}=1$.
There are exactly $d=(n, q+1)$ such $\lambda$ in  $\Bbb{F}_{q^2}$, and so the subgroup $Z$ has
 order $d$.  The quotient ${\rm PSU}_n(q)={\rm SU}_n(q)/Z$
is called the {\em projective special unitary group}, sometimes written by $U_n(q)$, and its order is
\begin{equation}\label{order} |U_n(q)|=\frac{|{\rm SU}_n(q)|}{d} =\frac{1}{d} q^{n\choose 2}\prod_{i=2}^{n}(q^i-(-1)^i).\end{equation}
The projective special unitary group $U_n(q)$ is usually a simple group,
however the exceptions are $U_2(2)$, $U_2(3)$ and $U_3(2)$.
It is a standard result that the groups $U_2(q)$ and $L_2(q)$ are isomorphic.

\begin{theorem}\label{but}  Let
$q$ be a power of a prime $p$. If $n\geqslant2$,  then the spectrum of
$U_n(q)$ is exactly the set of all divisors of the
following numbers:
\begin{itemize}
\item[{\rm (1)}] $\frac{q^n-(-1)^n}{d(q+1)}$, where $d=(n, q+1)$,
\item[{\rm (2)}] $\frac{[q^{n_1}-(-1)^{n_1}, \ q^{n_2}-(-1)^{n_2}]}{(n/(n_1, n_2), \ q+1)}$ for all $n_1, n_2>0$
such that $n_1+n_2=n$,
\item[{\rm (3)}] $[q^{n_1}-(-1)^{n_1}, \ q^{n_2}-(-1)^{n_2},
\ \ldots, \ q^{n_s}-(-1)^{n_s}]$ for each $s\geqslant 3$ and
all $n_1, n_2, \ldots, n_s>0$
such that $n_1+n_2+\cdots+n_s=n$,
\item[{\rm (4)}]  $p^k \cdot \frac{q^{n_1}-(-1)^{n_1}}{d}$ for all $k,
n_1>0$ such that $p^{k-1}+1+n_1=n$, and where $d=(n, q+1)$,
\item[{\rm (5)}]  $p^k\cdot [q^{n_1}-(-1)^{n_1}, \ q^{n_2}-(-1)^{n_2}, \ \ldots, \ q^{n_s}-(-1)^{n_s}]$ for each
$s\geqslant 2$ and all $k, n_1, n_2, \ldots, n_s>0$
such that $p^{k-1}+1+n_1+n_2+\cdots+n_s=n$,
\item[{\rm (6)}]   $p^k$ if $k>0$ and $p^{k-1}+1=n$.
\end{itemize}
\end{theorem}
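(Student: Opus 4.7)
The plan is to combine the Jordan decomposition in ${\rm SU}_n(q)$ with the classification of maximal tori, and then descend to $U_n(q) = {\rm SU}_n(q)/Z$, where $|Z| = d = (n,q+1)$. Every element of ${\rm SU}_n(q)$ has a unique decomposition $g = su = us$ with $s$ semisimple and $u$ unipotent of coprime orders, so the order of $g$ factorises as $|s|\cdot |u|$. This reduces the theorem to three sub-tasks: describe the possible orders of semisimple elements in $U_n(q)$ (which will yield (1)--(3)); describe the orders of unipotent elements (which are $p$-powers $p^k$, giving the $p^k$-factor in (4)--(6)); and combine them via centraliser analysis.

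For the semisimple part, I would use the standard parametrisation of the ${\rm SU}_n(q)$-conjugacy classes of maximal tori by partitions $n_1+\cdots+n_s=n$: the torus attached to such a partition is a quotient of $\prod_{i=1}^{s}\mathbb{Z}_{q^{n_i}-(-1)^{n_i}}$ by the determinant-one condition, and its exponent in $U_n(q)$ (after the further quotient by $Z$) is exactly the expression in (1), (2), or (3), depending on whether $s=1$, $s=2$, or $s\geqslant 3$. A short calculation shows that the denominators $d(q+1)$ and $(n/(n_1,n_2), q+1)$ in (1) and (2) come from combining the index of the determinant restriction with the central quotient, while for $s\geqslant 3$ these correction factors are absorbed into the LCM and disappear, giving the clean form of (3).

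For mixed elements, I would use that a unipotent element $u \in U_n(q)$ of order exactly $p^k$ contains a Jordan block of size in the interval $(p^{k-1}, p^k]$, and the smallest such block has size $p^{k-1}+1$. The centraliser of $u$ then contains a subgroup isomorphic to ${\rm GU}_{n-p^{k-1}-1}(q)$ acting on the subspace complementary to this minimal block, and selecting a semisimple $s$ inside this subgroup (automatically commuting with $u$) produces an element $su$ of order $p^k\cdot |s|$. Applying the semisimple analysis to the smaller unitary group then yields (4) (single cycle), (5) (several cycles), and (6) (the degenerate case where $n-p^{k-1}-1=0$, leaving no room for a semisimple partner).

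The main technical obstacle is the two-step descent ${\rm GU}_n(q) \to {\rm SU}_n(q) \to U_n(q)$: in particular, tracking how the determinant normalisation and the central quotient of order $d$ interact to produce the correct denominators in (1), (2), (4), and explaining why they collapse in (3) and (5). Rather than re-deriving everything from first principles, I would appeal to Buturlakin's explicit formulas for the spectra of finite classical groups and verify that the six divisor types listed here match his enumeration; the only remaining task is then to check that no exotic element orders have been missed, which follows because partitions of $n$ exhaust both the conjugacy classes of maximal tori and the Jordan types of unipotent elements.
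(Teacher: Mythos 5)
The paper's own ``proof'' of this theorem is a single line citing Corollary 3 of Buturlakin's paper on spectra of finite linear and unitary groups, and your proposal ultimately defers to that same source, so the two approaches coincide. Your preliminary sketch --- Jordan decomposition into commuting semisimple and unipotent parts, maximal tori of ${\rm GU}_n(q)$ parametrised by partitions of $n$, the minimal Jordan block of size $p^{k-1}+1$ for a unipotent element of order $p^k$, and the centraliser containing ${\rm GU}_{n-p^{k-1}-1}(q)$ --- is a faithful outline of how Buturlakin's result is actually established, but none of it appears in (or is needed for) the paper's one-line proof.
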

\begin{proof} See Corollary 3 in \cite{buturlakin}.
\end{proof}

From now on, we will concentrate on the simple unitary groups $U_3(q)$ and $U_4(q)$.
We list now a few immediate consequences of
Theorem \ref{but}.
 \begin{corollary}\label{u23} {\rm (see also \cite{aleeva, scientia, degree, ZU(3q)})}
 Let $q$ be a power of a prime $p$.  Then
\begin{itemize}
\item[{\rm (1)}] when $q$ is odd,   $\mu(U_3(q))= \left\{
\begin{array}{ll}
\left\{q^2-q+1, \ q^2-1, \ p(q+1)\right\} & \mbox{if} \  q \not \equiv -1 \pmod{3},\\[0.3cm]
\left\{\frac{q^2-q+1}{3}, \ \frac{q^2-1}{3}, \ \frac{p(q+1)}{3}, \
q+1\right\} & \mbox{if} \ q \equiv  -1 \pmod{3}.
\end{array}
\right.$
\item[{\rm (2)}] when $q$ is even,   $\mu(U_3(q))= \left\{
\begin{array}{ll}
\left\{q^{2}-q+1, \ q^{2}-1, \ 2(q+1), 4\right\} & \mbox{if}  \ \ \ q \not \equiv -1 \pmod{3},\\[0.3cm]
\left\{\frac{q^{2}-q+1}{3}, \ \frac{q^{2}-1}{3}, \ \frac{2(q+1)}{3}, \
q+1, 4\right\} & \mbox{if} \ \ \  q \equiv -1 \pmod{3}.
\end{array}
\right.$
\end{itemize}
\end{corollary}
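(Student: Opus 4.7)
The plan is to apply Theorem \ref{but} with $n=3$, enumerate the small finite list of integer solutions in each of the six parts, and then extract the maximal elements under divisibility. Throughout, set $d=(3,q+1)$, so $d=3$ if $q\equiv -1\pmod{3}$ and $d=1$ otherwise.

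First I would run through parts (1)--(6) of Theorem \ref{but}. Part (1) gives $(q^2-q+1)/d$. Part (2): the only ordered pair with $n_1+n_2=3$ (up to symmetry) is $(1,2)$, giving $[q+1,\,q^2-1]/(3,q+1)=(q^2-1)/d$. Part (3): $s\geqslant 3$ and $n_1+\cdots+n_s=3$ force $s=3$ and each $n_i=1$, yielding $q+1$. Part (4): the equation $p^{k-1}+1+n_1=3$ with $k,n_1>0$ forces $p^{k-1}=1$ and $n_1=1$, hence $k=1$ and the order $p(q+1)/d$. Part (5): $p^{k-1}+1+n_1+\cdots+n_s=3$ with $s\geqslant 2$ and positive entries requires $p^{k-1}+n_1+\cdots+n_s\leqslant 2$, impossible. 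Part (6): $p^{k-1}+1=3$ forces $p=2$ and $k=2$, contributing the order $4$ when $q$ is even. This exhausts the candidates for $\mu(U_3(q))$.

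The next step is divisibility pruning. The key arithmetic facts are: substituting $q\equiv -1\pmod{q+1}$ in $q^2-q+1$ gives $3$, so $\gcd(q^2-q+1,\,q+1)=(3,q+1)=d$, and hence $(q^2-q+1)/d$ is coprime with $q+1$, with $(q^2-1)/d$, and (since $q^2-q+1\equiv 1\pmod p$) with $p$; so $(q^2-q+1)/d$ is always maximal. Also $p\nmid q-1$ because $p\mid q$. For the divisibility $q+1\mid (q^2-1)/d$, write $(q^2-1)/d=\bigl((q+1)/d\bigr)(q-1)\cdot d/d^{?}$ more carefully: when $d=1$ clearly $q+1\mid q^2-1$, but when $d=3$ we have $(q^2-1)/3=\bigl((q+1)/3\bigr)(q-1)$, and $q+1\mid (q^2-1)/3$ would require $3\mid q-1$, which fails since $q\equiv -1\pmod 3$. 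Similarly $q+1\mid p(q+1)/d$ fails when $d=3$ since $p\neq 3$ (as $q\equiv -1\not\equiv 0\pmod 3$).

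Assembling: in case $q$ odd and $q\not\equiv -1\pmod 3$, $q+1$ is absorbed into $q^2-1$, leaving $\{q^2-q+1,\,q^2-1,\,p(q+1)\}$; in case $q$ odd and $q\equiv -1\pmod 3$, the element $q+1$ remains maximal alongside the three divided-by-$3$ orders, giving the second formula. When $q$ is even, the element $4$ from part (6) is incomparable with all others because $q\pm 1$ are odd and $2(q+1)/d$ contains only $2^1$, so it must be added to the list, producing the remaining two cases. The main obstacle is purely bookkeeping: one must verify that after dividing by $d$, no absorption between $q+1$, $(q^2-1)/d$ and $p(q+1)/d$ takes place in the $q\equiv -1\pmod 3$ branch, which reduces to the coprimality observations above.
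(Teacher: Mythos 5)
Your proposal is correct and follows exactly the route the paper intends: the paper states this corollary as an ``immediate consequence'' of Theorem \ref{but} (Buturlakin's description of the spectrum) without writing out the details, and your enumeration of parts (1)--(6) for $n=3$ together with the divisibility pruning (using $\gcd(q^2-q+1,\,q+1)=(3,q+1)$, $p\nmid q-1$, and the $3\mid q-1$ obstruction in the $q\equiv-1\pmod 3$ branch) is precisely the omitted verification. The only cosmetic quibble is the garbled display $(q^2-1)/d=\bigl((q+1)/d\bigr)(q-1)\cdot d/d^{?}$, but the surrounding sentence makes the intended (and correct) computation clear.
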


\begin{corollary}\label{u4-odd} {\rm (see also \cite{zav-L4})}
Let $q$ be a power of an odd prime $p$. Denote $d=(4,
q+1)$. Then $\mu(U_4(q))$ contains the following (and only the
following) numbers:
\begin{itemize}
\item[{\rm (1)}]  $(q-1)(q^2+1)/d$,
$(q^3+1)/d$, $p(q^2-1)/d$, $q^2-1$;
\item[{\rm (2)}] $p(q+1)$, if and only if $d=4$;
\item[{\rm (3)}] $9$, if and only if $p=3$.
\end{itemize}
\end{corollary}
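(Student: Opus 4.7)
The plan is to apply Theorem \ref{but} with $n=4$, enumerate all tuples $(n_1,\ldots,n_s)$ and pairs $(k,n_1)$ that arise in its six clauses, and then sift the resulting list for the divisibility-maximal entries. Since $q$ is odd, $q+1$ is even, so $d=(4,q+1)\in\{2,4\}$; both subcases will be handled together, with care taken at the points where $d$ enters.

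First I would compute the candidates from each clause. Clause (1) gives $(q^4-1)/(d(q+1))=(q-1)(q^2+1)/d$. Clause (2), applied with $(n_1,n_2)\in\{(1,3),(2,2)\}$, contributes $(q^3+1)/d$ (because $q+1\mid q^3+1$) and $(q^2-1)/(2,q+1)=(q^2-1)/2$. Clause (3), applied to the partitions $(1,1,2)$ and $(1,1,1,1)$, contributes $q^2-1$ and $q+1$. Clause (4) requires $p^{k-1}+1+n_1=4$; for odd $p$ this forces $k=1,n_1=2$, giving $p(q^2-1)/d$. Clause (5) needs $p^{k-1}+1+n_1+\cdots+n_s=4$ with $s\geqslant2$; again the only option is $k=1$, $s=2$, $n_1=n_2=1$, yielding $p(q+1)$. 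Clause (6) asks $p^{k-1}+1=4$, so $p=3,k=2$, contributing $9$.

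Next I would filter out non-maximal entries. The values $q+1$ and $(q^2-1)/2$ both divide $q^2-1$ and can be discarded. This leaves the unconditional core $(q-1)(q^2+1)/d$, $(q^3+1)/d$, $q^2-1$, $p(q^2-1)/d$, plus the conditional candidates $p(q+1)$ and (if $p=3$) $9$. The hardest bookkeeping is deciding when $p(q+1)$ survives: if $d=2$, then $q\equiv 1\pmod 4$, so $(q-1)/2$ is even, whence $p(q+1)\mid p(q-1)(q+1)/2=p(q^2-1)/d$ and $p(q+1)$ is absorbed; if $d=4$, then $q\equiv 3\pmod 4$, so $(q-1)/2$ is odd, and $p(q+1)\nmid p(q^2-1)/4$, making $p(q+1)$ a new maximal element. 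For $p=3$ the factor $3$ is coprime to $q\pm 1$ and to $q^2-q+1$, so $9$ divides none of the other listed numbers and is genuinely maximal.

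Finally, I would confirm that within the unconditional four-element core no element divides another. This reduces to the elementary coprimalities $\gcd(q-1,q^2-q+1)=\gcd(q-1,1)=1$ and $\gcd(q^2+1,q+1)\mid 2$, together with the observation that $p\nmid q^{i}\pm 1$ for any $i$, so $p(q^2-1)/d$ is the only core element carrying $p$ and, having $p$-free part $q^2-1$, cannot properly divide any other. The main obstacle is tracking $d$ under the two parities of $(q+1)\bmod 4$ in the step $p(q+1)\mid p(q^2-1)/d$; once that is dispatched, the remaining verifications are routine.
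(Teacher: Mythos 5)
Your derivation is exactly the paper's intended argument: Corollary \ref{u4-odd} is presented there as an immediate consequence of Theorem \ref{but}, obtained by enumerating the admissible partitions and pairs $(k,n_1)$ for $n=4$ in its six clauses and then discarding the non-maximal entries, which is precisely what you do (including the key observation that $p(q+1)$ is absorbed by $p(q^2-1)/d$ exactly when $d=2$). The only quibble --- present in the statement itself and not introduced by your argument --- is the degenerate case $q=3$, where $p(q^2-1)/d=6$ divides $p(q+1)=12$ and hence is not actually maximal; since the paper only invokes this corollary for $q\geqslant 9$, this is harmless.
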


\begin{corollary}\label{u4-even}
Suppose that $q=2^n$ with $n$ a natural number. If $n=1$, then $\mu(U_4(2))=\{5, 9, 12\}$, while if $n\geqslant 2$, then $\mu(U_4(q))=\left\{(q-1)(q^2+1), \
q^3+1,  \ 2(q^2-1), \ 4(q+1)\right\}$.
\end{corollary}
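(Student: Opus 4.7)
The plan is to instantiate Theorem \ref{but} at $n=4$, $p=2$, $q=2^n$, enumerate the six families of maximal divisors listed there, and then check which of the resulting candidates are maximal under divisibility. A crucial first simplification is that, since $q$ is even, $q+1$ is odd, so $d=\gcd(4,q+1)=1$; every denominator in parts (1)--(5) therefore disappears, which keeps the bookkeeping short.

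First I would run through the six cases of Theorem \ref{but} in order. Part (1) gives $(q-1)(q^2+1)$. Part (2), over the partitions $4=1+3$ and $4=2+2$, produces $\mathrm{lcm}(q+1,q^3+1)=q^3+1$ and $q^2-1$. Part (3), over the partitions $4=1+1+2$ and $4=1+1+1+1$, produces $q^2-1$ and $q+1$. Part (4), with $p=2$ and $2^{k-1}+1+n_1=4$, has exactly two solutions $(k,n_1)=(1,2)$ and $(2,1)$, contributing $2(q^2-1)$ and $4(q+1)$. Part (5), under $2^{k-1}+1+n_1+\cdots+n_s=4$ with $s\geq2$, admits only $k=1$, $s=2$, $n_1=n_2=1$, giving $2(q+1)$. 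Part (6) is vacuous because $2^{k-1}+1=4$ has no solution. So every element of $\omega(U_4(q))$ divides at least one of
\[
(q-1)(q^2+1),\ q^3+1,\ 2(q^2-1),\ 4(q+1),\ q^2-1,\ 2(q+1),\ q+1.
\]

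Second, I would verify that for $n\geq 2$ the first four numbers are pairwise incomparable under divisibility and that the remaining three are absorbed by them. The two odd candidates $(q-1)(q^2+1)$ and $q^3+1=(q+1)(q^2-q+1)$ cannot be divided by the even candidates $2(q^2-1)$ or $4(q+1)$; between themselves, observe that $\gcd(q-1,q+1)=1$ and $\gcd(q+1,q^2+1)=\gcd(q+1,2)=1$ (using that $q+1$ is odd), and that $q^2-q+1\equiv 1\pmod{q-1}$, so for $q\geq 4$ neither divides the other. For the even pair, $4(q+1)\nmid 2(q^2-1)=2(q-1)(q+1)$ because $q-1$ is odd, and a size comparison rules out the reverse. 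The leftover candidates satisfy $q+1\mid 2(q+1)\mid 4(q+1)$ and $q^2-1\mid 2(q^2-1)$, so they do not appear in $\mu(U_4(q))$. This establishes the formula for $n\geq 2$.

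Third, for $n=1$ (so $q=2$) the same four numbers specialise to $5,\,9,\,6,\,12$, and now the accidental relation $6\mid 12$ collapses the maximal set to $\{5,9,12\}$, matching the stated exceptional value $\mu(U_4(2))=\{5,9,12\}$. The only real obstacle is the case analysis in parts (4) and (5) of Theorem \ref{but} (listing exactly which $(k,n_1,\ldots,n_s)$ contribute) and the separate treatment of $q=2$, where the size bounds used above fail; the divisibility incomparabilities are routine once the parity observations and the gcd identities $\gcd(q+1,q-1)=\gcd(q+1,q^2-q+1)=\gcd(q+1,q^2+1)=1$ are in place.
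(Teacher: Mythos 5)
Your proposal is correct and is exactly the computation the paper has in mind: the corollary is stated as an immediate consequence of Theorem \ref{but}, and your enumeration of the six families with $d=(4,q+1)=1$ for even $q$, followed by the divisibility/maximality check and the separate collapse at $q=2$, is that intended derivation. The only point worth tightening is that ruling out an \emph{odd} candidate dividing an \emph{even} one (e.g.\ $(q-1)(q^2+1)\mid 2(q^2-1)$) needs the size comparison too, not just parity, but this is covered by the routine estimates you already invoke.
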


The prime graph ${\rm GK}(U_3(q))$ has two connected components (see \cite{kondra, wili}):
$\pi_1(U_3(q))=\pi(p(q^2-1))$ and $\pi_2(U_3(q))=\pi\left((q^2-q+1)/d\right)$, where $d=(3, q+1)$. Moreover, the prime graph ${\rm GK}(U_4(q))$ has one connected component
in all cases except the two particular cases $q=2, 3$.  In fact, we have $\pi_1(U_4(2))=\{2, 3\}$ and $\pi_2(U_4(2))=\{5\}$, while $\pi_1(U_4(3))=\{2, 3\}$,  $\pi_2(U_4(3))=\{5\}$ and $\pi_3(U_4(3))=\{7\}$.

If $n$ is a nonzero integer and $r$ is an odd prime with $(r, n) = 1$, then $e(r, n)$ denotes the multiplicative order of $n$ modulo $r$,  i.e., a minimal natural number $k$ with $n^k\equiv 1\pmod{r}$.
Given an odd integer $n$, we put $e(2, n)=1$ if $n\equiv 1\pmod{4}$, and $e(2, n)=2$ if $n\equiv 3\pmod{4}$. Fix an integer $n$ with $|n|>1$. A prime $r$ with $e(r, n)=i$ is called a {\em primitive prime divisor of} $n^i-1$.  We write $r_i(n)$  to denote some primitive prime divisor of $n^i-1$, if such a prime exists, and $R_i(n)$ to denote the set of all such divisors. Instead of $r_i(n)$ and $R_i(n)$ we simply write $r_i$ and $R_i$ if it does not lead to confusion.  Bang  \cite{Bang} and Zsigmondy \cite{Zsigmondy} proved that primitive prime divisors exist except for a few cases$^1$\footnote{$^1$In fact, Bang \cite{Bang} proved in 1886 that $n^i-1$ has a primitive prime divisor for all $n\geqslant 2$ and $i>2$ except for $n=2$ and $i=6$.
Then, Zsigmondy \cite{Zsigmondy} proved in 1892 that for coprime integers $a> b\geqslant 1$  and $i> 2$, there exists a prime $r$ dividing $a^i-b^i$ but not $a^k-b^k$ for $1\leqslant k<i$, except when $a=2$, $b=1$, and $i=6$.}\!\!\!\!.
\begin{theorem}\label{zsig}  {\rm  (Bang--Zsigmondy)}.
Let $n$ and $i$ be integers satisfying $|n|>1$ and $i\geqslant 1$. Then $R_i(n)\neq \emptyset$, except when
$(n, i)\in \{(2, 1), (2, 6), (-2,2), (-2,3), (3, 1), (-3,2)\}$.
\end{theorem}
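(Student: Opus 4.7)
The plan is to prove this classical theorem via cyclotomic polynomials. For positive $n \geqslant 2$, I would start from the factorization
\[
n^{i}-1 \;=\; \prod_{d\mid i}\Phi_{d}(n),
\]
where $\Phi_{d}$ is the $d$-th cyclotomic polynomial. The key arithmetic lemma is that a prime $r$ divides $\Phi_{i}(n)$ if and only if either $e(r,n)=i$, or $r$ is the largest prime divisor of $i$ and in that case $v_{r}(\Phi_{i}(n))=1$. Consequently, $\Phi_{i}(n)$ yields a primitive prime divisor of $n^{i}-1$ exactly when
\[
\Phi_{i}(n) \;>\; r_{\max}(i),
\]
where $r_{\max}(i)$ denotes the largest prime dividing $i$ (and $r_{\max}(1):=1$).

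The next step is a size estimate. Writing $\Phi_{i}(n)=\prod_{\zeta}(n-\zeta)$ with $\zeta$ ranging over primitive $i$-th roots of unity, each factor has modulus at least $n-1$, so $\Phi_{i}(n)\geqslant (n-1)^{\varphi(i)}$. Whenever $\varphi(i)\geqslant 2$ and $n\geqslant 3$, a short calculation gives $(n-1)^{\varphi(i)}>i\geqslant r_{\max}(i)$, which settles the bulk of the cases. The remaining configurations fall into three finite families: (a)~$\varphi(i)=1$, i.e.\ $i\in\{1,2\}$, where the bound collapses, (b)~$n=2$ with $\varphi(i)=2$, in which the inequality is tight, and (c)~a short list handled by direct computation. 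For $i=1$ one inspects $\Phi_{1}(n)=n-1$; this is trivial for $n=2$ and, with the paper's convention $e(2,n)=2$ when $n\equiv 3\pmod{4}$, fails also for $n=3$. For $i=2$, $\Phi_{2}(n)=n+1$ always delivers a primitive divisor once $n\geqslant 2$. The only genuinely exceptional positive case, $n=2,\,i=6$, comes from $\Phi_{6}(2)=3$, which is absorbed into $2^{2}-1$.

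Finally, the negative $n$ cases reduce to positive ones via the identities $n^{i}-1=m^{i}-1$ when $i$ is even and $n^{i}-1=-(m^{i}+1)$ when $i$ is odd, writing $n=-m$ with $m\geqslant 2$. The order of $-m$ modulo an odd prime $r$ equals $e(r,m)$, $2e(r,m)$, or $e(r,m)/2$ depending on parities, so one translates the positive statement and records the extra exceptions $(-2,2)$, $(-2,3)$, $(-3,2)$. The main obstacle I anticipate is not the large-$i$ bound, which is essentially routine, but the bookkeeping around the $2$-adic case and the nonstandard convention $e(2,n)\in\{1,2\}$: one must verify consistently that the translation between $\Phi_{i}$-divisors and primitive divisors continues to hold at the prime $2$, and that the reduction $-m\leadsto m$ does not introduce or lose exceptional pairs beyond those listed.
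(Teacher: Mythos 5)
The paper does not actually prove this statement: Theorem 3.5 is quoted as the classical Bang--Zsigmondy theorem, with the footnote and bibliography pointing to the original 1886 and 1892 papers, so there is no in-text argument to compare yours against. Your outline is the standard modern cyclotomic proof, and its architecture is sound: factor $n^{i}-1=\prod_{d\mid i}\Phi_{d}(n)$, observe that a prime dividing $\Phi_{i}(n)$ is either primitive or equal to the largest prime $r_{\max}(i)$ dividing $i$, and force a primitive divisor by showing $\Phi_{i}(n)>r_{\max}(i)$. You are also right that the one place where $r_{\max}(i)$ can divide $\Phi_{i}(n)$ to a power higher than $1$ (namely $i=2$, $r=2$, $4\mid n+1$) is exactly what the paper's convention $e(2,n)=2$ for $n\equiv 3\pmod 4$ is designed to absorb, and the reduction of negative $n$ to positive $n$ via parity of $i$ is correct and does produce precisely the extra exceptions $(-2,2)$, $(-2,3)$, $(-3,2)$.

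There is, however, a genuine gap at $n=2$. Your workhorse estimate is $\Phi_{i}(n)\geqslant (n-1)^{\varphi(i)}$, which is identically $1$ when $n=2$ and therefore rules out nothing there; yet you dispose of $n=2$ under ``$\varphi(i)=2$, the inequality is tight'' plus ``a short list handled by direct computation.'' Infinitely many pairs $(2,i)$ remain, and no finite computation closes them. You need a genuine lower bound for $\Phi_{i}(2)$, e.g.\ first reduce to squarefree $i$ via $\Phi_{i}(x)=\Phi_{\mathrm{rad}(i)}\bigl(x^{i/\mathrm{rad}(i)}\bigr)$ and then use
\[
\Phi_{i}(2)\;=\;\prod_{d\mid i}(2^{d}-1)^{\mu(i/d)}\;\geqslant\;2^{\varphi(i)}\prod_{j\geqslant 1}\bigl(1-2^{-j}\bigr)\;>\;0.28\cdot 2^{\varphi(i)},
\]
which together with $r_{\max}(i)\leqslant\varphi(i)+1$ reduces $n=2$ to finitely many $i$, among which $i=6$ emerges as the unique exception. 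Two smaller slips: the intermediate inequality $(n-1)^{\varphi(i)}>i$ for $n\geqslant 3$, $\varphi(i)\geqslant 2$ is false (take $n=3$, $i=6$: $4<6$); what you need, and what is true, is $(n-1)^{\varphi(i)}\geqslant 2^{\varphi(i)}>\varphi(i)+1\geqslant r_{\max}(i)$. And the ``if and only if'' in your key lemma should be one-directional --- the largest prime of $i$ need not divide $\Phi_{i}(n)$ at all, and when it does, $v_{r}(\Phi_{i}(n))=1$ requires $i>2$.
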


Following \cite{za-primegraph}, we represent the prime graph ${\rm GK}(G)$ in a {\em compact form}. By a compact form we mean a graph whose vertices are labeled with sets $U_i$. A vertex labeled $U_i$
 represents the complete subgraph of  ${\rm GK}(G)$ on $U_i$. An edge joining $U_i$ and $U_j$ is a set of edges of ${\rm GK}(G)$ connecting each vertex in $U_i$ to each vertex in $U_j$. When $U_i$ is a singleton (e.g. $\{s\}$),  we will often write $s$ instead of $\{s\}$.
 Figures 1-3, for instance, depict the compact forms of the prime graphs of the simple unitary groups $U_3(q)$, with $q=p^n$. Here, $U_1=R_1(q)\setminus \{2, 3\}$, $U_2=R_2(q)\setminus \{2, 3\}$, and $U_3=R_6(q)$.

\setlength{\unitlength}{4.5mm}
\begin{picture}(0,0)(-2.25,8)
\put(0,0){\circle*{0.35}}%
\put(6,0){\circle*{0.35}}%
\put(3,2){\circle*{0.35}}%
\put(3,4){\circle*{0.35}}%
\put(3,6){\circle*{0.35}}%
\put(6,4){\circle*{0.35}}%

\put(12,0){\circle*{0.35}}%
\put(18,0){\circle*{0.35}}%
\put(15,2){\circle*{0.35}}%
\put(15,4){\circle*{0.35}}%
\put(15,6){\circle*{0.35}}%
\put(18,4){\circle*{0.35}}%

\put(24,0){\circle*{0.35}}%
\put(30,0){\circle*{0.35}}%
\put(27,2){\circle*{0.35}}%
\put(27,4){\circle*{0.35}}%
\put(27,6){\circle*{0.35}}%
\put(30,4){\circle*{0.35}}%

\put(0,0){\line(1,0){6}}
\put(0,0){\line(3,2){3}}
\put(0,0){\line(3,4){3}}
\put(0,0){\line(1,2){3}}
\put(3,6){\line(0,-1){2}}
\put(6,0){\line(-3,2){3}}
\put(6,0){\line(-3,4){3}}
\put(6,0){\line(-1,2){3}}

\put(12,0){\line(1,0){6}}
\put(12,0){\line(3,2){3}}
\put(12,0){\line(3,4){3}}
\put(12,0){\line(1,2){3}}
\put(18,0){\line(-3,2){3}}
\put(18,0){\line(-3,4){3}}
\put(18,0){\line(-1,2){3}}

\put(24,0){\line(1,0){6}}
\put(24,0){\line(3,2){3}}
\put(24,0){\line(3,4){3}}
\put(24,0){\line(1,2){3}}
\put(27,6){\line(0,-1){4}}
\put(30,0){\line(-3,2){3}}
\put(30,0){\line(-3,4){3}}
\put(30,0){\line(-1,2){3}}

\put(-0.2,-1){\footnotesize 2}%
\put(5.7,-1){\footnotesize $U_2$}%
\put(2.75,1){\footnotesize $p$}%
\put(2.75,3){\footnotesize $3$}%
\put(2.75,6.5){\footnotesize $U_1$}%
\put(5.6,4.5){\footnotesize $U_3$}%

\put(11.8,-1){\footnotesize 2}%
\put(17.7,-1){\footnotesize $U_2$}%
\put(14.75,1){\footnotesize $p$}%
\put(14.75,3){\footnotesize $3$}%
\put(14.75,6.5){\footnotesize $U_1$}%
\put(17.6,4.5){\footnotesize $U_3$}%

\put(23.8,-1){\footnotesize 2}%
\put(29.7,-1){\footnotesize $U_2$}%
\put(26.75,1){\footnotesize $p$}%
\put(27.25,4){\footnotesize $3$}%
\put(26.75,6.5){\footnotesize $U_1$}%
\put(29.6,4.5){\footnotesize $U_3$}%

\put(0.2,-2.3){\footnotesize  (a) \ $(q+1)_{(3)}=1$}
\put(12.2,-2.3){\footnotesize (b)  \ $(q+1)_{(3)}=3$}
\put(24.2,-2.3){\footnotesize (c) \ $(q+1)_{(3)}>3$}

\put(1.5,-4.5){\footnotesize {\bf Fig. 1.} \  The diagram of a compact form for ${\rm GK}(U_3(q))$, where $q=p^n$ is odd and $ p\neq 3$.}
\end{picture}
\vspace{6cm}

\setlength{\unitlength}{4.5mm}
\begin{picture}(0,0)(-2.25,8)
\put(0,0){\circle*{0.35}}%
\put(6,0){\circle*{0.35}}%
\put(3,3){\circle*{0.35}}%
\put(3,6){\circle*{0.35}}%
\put(6,4){\circle*{0.35}}%

\put(12,0){\circle*{0.35}}%
\put(18,0){\circle*{0.35}}%
\put(15,3){\circle*{0.35}}%
\put(15,6){\circle*{0.35}}%
\put(18,4){\circle*{0.35}}%

\put(24,0){\circle*{0.35}}%
\put(30,0){\circle*{0.35}}%
\put(27,3){\circle*{0.35}}%
\put(27,6){\circle*{0.35}}%
\put(30,4){\circle*{0.35}}%

\put(0,0){\line(1,0){6}}
\put(3,6){\line(0,-1){3}}
\put(6,0){\line(-1,1){3}}
\put(6,0){\line(-1,2){3}}

\put(12,0){\line(1,0){6}}
\put(18,0){\line(-1,1){3}}
\put(18,0){\line(-1,2){3}}

\put(24,0){\line(1,0){6}}
\put(24,0){\line(1,1){3}}
\put(27,6){\line(0,-1){3}}
\put(30,0){\line(-1,1){3}}
\put(30,0){\line(-1,2){3}}

\put(-0.2,-1){\footnotesize 2}%
\put(5.7,-1){\footnotesize $U_2$}%
\put(2.75,2){\footnotesize $3$}%
\put(2.75,6.5){\footnotesize $U_1$}%
\put(5.6,4.5){\footnotesize $U_3$}%

\put(11.8,-1){\footnotesize 2}%
\put(17.7,-1){\footnotesize $U_2$}%
\put(14.85,2){\footnotesize $3$}%
\put(14.75,6.5){\footnotesize $U_1$}%
\put(17.6,4.5){\footnotesize $U_3$}%

\put(23.8,-1){\footnotesize 2}%
\put(29.7,-1){\footnotesize $U_2$}%
\put(26.75,2){\footnotesize $3$}%
\put(26.75,6.5){\footnotesize $U_1$}%
\put(29.6,4.5){\footnotesize $U_3$}%

\put(0.2,-2.3){\footnotesize  (a) \ $(q+1)_{(3)}=1$}
\put(12.2,-2.3){\footnotesize (b)  \ $(q+1)_{(3)}=3$}
\put(24.2,-2.3){\footnotesize (c) \ $(q+1)_{(3)}>3$}

\put(3.5,-4.5){\footnotesize {\bf Fig. 2.} \  The diagram of a compact form for ${\rm GK}(U_3(q))$, where $q$ is even.}
\end{picture}
\vspace{5.5cm}

\setlength{\unitlength}{4.5mm}
\begin{picture}(0,0)(-2.25,8)
\put(12,0){\circle*{0.35}}%
\put(18,0){\circle*{0.35}}%
\put(15,3){\circle*{0.35}}%
\put(15,6){\circle*{0.35}}%
\put(18,4){\circle*{0.35}}%

\put(12,0){\line(1,0){6}}
\put(12,0){\line(1,1){3}}
\put(12,0){\line(1,2){3}}
\put(18,0){\line(-1,1){3}}
\put(18,0){\line(-1,2){3}}

\put(11.8,-1){\footnotesize 2}%
\put(17.7,-1){\footnotesize $U_2$}%
\put(14.75,2){\footnotesize $3$}%
\put(14.75,6.5){\footnotesize $U_1$}%
\put(17.6,4.5){\footnotesize $U_3$}%

\put(6,-2.75){\footnotesize {\bf Fig.  3.} \  The diagram of a compact form for ${\rm GK}(U_3(3^n))$.}
\end{picture}
\vspace{5.5cm}

Similarly, we can draw the compact form of ${\rm GK}(U_4(q))$, where $q=p^n$ for some prime $p$, as
 illustrated in Figures 4-5.

\setlength{\unitlength}{4.5mm}
\begin{picture}(0,0)(-3,6)
\put(0,0){\circle*{0.35}}%
\put(6,0){\circle*{0.35}}%
\put(12,0){\circle*{0.35}}%
\put(3,3){\circle*{0.35}}%
\put(9,3){\circle*{0.35}}%

\put(18,0){\circle*{0.35}}%
\put(24,0){\circle*{0.35}}%
\put(30,0){\circle*{0.35}}%
\put(21,3){\circle*{0.35}}%
\put(27,3){\circle*{0.35}}%
\put(24,2){\circle*{0.35}}%

\put(0,0){\line(1,1){3}}
\put(3,3){\line(1,0){6}}
\put(3,3){\line(1,-1){3}}
\put(6,0){\line(1,1){3}}
\put(9,3){\line(1,-1){3}}

\put(18,0){\line(1,1){3}}
\put(21,3){\line(1,0){6}}
\put(21,3){\line(1,-1){3}}
\put(24,0){\line(1,1){3}}
\put(27,3){\line(1,-1){3}}

\put(24,2){\line(3,1){3}}
\put(24,2){\line(-3,1){3}}
\put(24,2){\line(0,-1){2}}

\put(-1,-1){\footnotesize $R_4(q)$}%
\put(2.2,3.75){\footnotesize $R_1(q)$}%
\put(5.8,-0.8){\footnotesize $p$}%
\put(8.2,3.75){\footnotesize $R_2(q)$}%
\put(11.3,-1){\footnotesize $R_6(q)$}%

\put(17,-1){\footnotesize $R_4(q)$}%
\put(20.2,3.75){\footnotesize $R_1(q)$}%
\put(23.8,-0.8){\footnotesize $p$}%
\put(26.2,3.75){\footnotesize $R_2(q)\setminus \{2\}$}%
\put(29.3,-1){\footnotesize $R_6(q)$}%
\put(23.8,2.4){\footnotesize $2$}%

\put(-2,-3){{\small \bf Fig. 4.} \mbox{The compact form of ${\rm GK}(U_4(q))$}}
\put(0.75,-4.2){ where $q=p^n$ and $(q+1)_{(2)}\neq 4$. }

\put(16,-3){{\small \bf Fig. 5.} \mbox{The compact form of ${\rm GK}(U_4(q))$}}
\put(18.75,-4.2){ where $q=p^n$ and $(q+1)_{(2)}=4$. }
\end{picture}
\vspace{5.5cm}

In Tables 1-3, we have determined the degrees of vertices of the prime graph ${\rm GK}(U_3(q))$ and ${\rm GK}(U_4(q))$. For convenience, we write $A_{i_1,i_2, \ldots, i_k}=A_{i_1}\cup A_{i_2}\cup \cdots \cup A_{i_k}$.

\begin{center}
{\bf Table 1.}  {\em The vertex degrees of
 ${\rm GK}(U_3(q))$, where $q=p^n$ is odd.}\\[0.2cm]
\begin{tabular}{|l|llllll|} \hline
Restrictions & $2$ & $3$ & $p>3$  &  $v\in U_1$  & $v\in U_2$  &  $v\in U_3$ \\[0.1cm]
\hline $(q+1)_{(3)}=1$  & $|U_{1,2}|+2$ & $|U_{1,2}|+1$ & $|U_2|+1$ & $|U_{1,2}|+1$ & $|U_{1,2}|+2$  & $|U_3|-1$ \\[0.1cm]
 $(q+1)_{(3)}=3$ &  $|U_{1,2}|+2$ & $|U_2|+1$ & $|U_2|+1$ & $|U_{1,2}|+1$ & $|U_{1,2}|+2$ & $|U_3|-1$\\[0.1cm]
  $(q+1)_{(3)}>3$ & $|U_{1,2}|+2$ & $|U_{1,2}|+2$  & $|U_2|+2$  & $|U_{1,2}|+1$ & $|U_{1,2}|+2$ & $|U_3|-1$ \\[0.1cm]

  $p=3$ &  $|U_{1,2}|+1$ & $|U_2|+1$ &  & $|U_{1,2}|$ & $|U_{1,2}|+1$ & $|U_3|-1$ \\[0.1cm]
\hline
\end{tabular}
\end{center}

\vspace{0.2cm}

\begin{center}
{\bf Table 2.}  {\em The vertex degrees of
 ${\rm GK}(U_3(q))$, where $q$ is even.}\\[0.2cm]
\begin{tabular}{|l|lllll|} \hline
Restrictions on $q$ & $2$ & $3$  &  $v\in U_1$  & $v\in U_2$  &  $v\in U_3$ \\[0.1cm]
\hline $(q+1)_{(3)}=1$  & $|U_2|$ & $|U_{1,2}|$ & $|U_{1,2}|$ & $|U_{1,2}|+1$  & $|U_3|-1$ \\[0.1cm]
 $(q+1)_{(3)}=3$ &  $|U_2|$ & $|U_2|$ & $|U_{1,2}|-1$ & $|U_{1,2}|+1$  & $|U_3|-1$\\[0.1cm]
  $(q+1)_{(3)}>3$ & $|U_2|+1$ & $|U_{1,2}|+1$  & $|U_{1,2}|$  & $|U_{1,2}|+1$ & $|U_3|-1$ \\
\hline
\end{tabular}
\end{center}

\vspace{0.2cm}

\begin{center}
{\bf Table 3.}  {\em The vertex degrees of
 ${\rm GK}(U_4(q))$, where $q=p^n$.}\\[0.2cm]
\begin{tabular}{|l|llllll|} \hline
Restrictions on $q$ & $2$ & $p$ & $v\in R_1\setminus \{2\}$  &  $v\in R_2\setminus \{2\}$  & $v\in R_6$  &  $v\in R_4$ \\[0.1cm]
\hline $(q+1)_{(2)}\neq 4, 2\nmid q$  & $|R_{1,2,4}|$ & $|R_{1,2}|$ & $|R_{1,2,4}|$ & $|R_{1,2,6}|$ & $|R_{2,6}|-1$  & $|R_{1,4}|-1$ \\[0.1cm]
 $(q+1)_{(2)}=4, 2\nmid q$ &  $|R_{1,2}|$ & $|R_{1,2}|$ & $|R_{1,2,4}|$ & $|R_{1,2,6}|$ & $|R_{2,6}|-1$ & $|R_{1,4}|-1$\\[0.1cm]
$(q+1)_{(2)}\neq 4, 2|q$ &  $|R_{1,2}|$ &  & $|R_{1,2,4}|$ & $|R_{1,2,6}|$ & $|R_{2,6}|-1$ & $|R_{1,4}|-1$\\
\hline
\end{tabular}
\end{center}


\section{\sc OD-Characterization of 3-Dimensional Unitary Groups}
As pointed out in the introduction,  in \cite{degree} it has been proved that  if $q>5$ is a prime power satisfying $|\pi((q^2-q+1)/(3,q+1))|=1$, then $h(U_3(q))=1$. For instance, when $q<10^2$ is a prime power we can find the following values
$q\in \{7, 8, 9, 11, 13, 16, 19, 23, 25, 29, 32, 41, 53, 67, 71, 79, 81, 83\}$,
for which $q$ satisfies the above condition, and so $h(U_3(q))=1$.
On the other hand,  in \cite{atmost17, SZ-2008, ZS-2009, ZS-2010}
the OD-characterization problem for simple groups $U_3(q)$ was solved for $q$ equals to $3, 4, 5, 17$. The main goal of this section is to argue for the validity of the following theorem.
\begin{theorem}\label{OD-U(3,q)}
The simple unitary groups $U_3(q)$ for $q\in \{31, 37, 43, 47, 49, 59, 61, 64, 73, 89, 97\}$ are OD-characterizable.
\end{theorem}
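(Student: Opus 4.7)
The plan is to fix $q\in\{31, 37, 43, 47, 49, 59, 61, 64, 73, 89, 97\}$ and a finite group $G$ with $|G|=|U_3(q)|$ and ${\rm D}(G)={\rm D}(U_3(q))$, and deduce $G\cong U_3(q)$. First I would verify, using Corollary~\ref{u23}, that for each listed $q$ the second component $\pi_2(U_3(q))=\pi\bigl((q^2-q+1)/(3,q+1)\bigr)$ consists of exactly two primes, say $\{r,s\}$ (e.g.\ $\{7,19\}$ for $q=31$, $\{31,43\}$ for $q=37$, \ldots, $\{67,139\}$ for $q=97$). Reading off Tables~1 and 2, these are the only two vertices of degree $1$ in ${\rm GK}(U_3(q))$; every vertex of $\pi_1(U_3(q))$ has degree at least $2$, because $|\pi_1|\geq 4$ and in the compact form (Figures~1--3) the vertex $2$ is joined to every other prime of $\pi_1$ while each prime of $U_1\cup U_2$ is joined both to $2$ and to every other prime of $U_1\cup U_2$. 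Since ${\rm D}(G)={\rm D}(U_3(q))$, the group $G$ has exactly two primes of degree $1$, and these must be $r$ and $s$; an elementary inspection of the remaining degrees then shows $\{r,s\}$ is a whole connected component of ${\rm GK}(G)$, so $\pi_2(G)=\{r,s\}=\pi_2(U_3(q))$.

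Second, I would apply Proposition~\ref{prop2} to the disconnected graph ${\rm GK}(G)$. The Frobenius alternative $G=FH$ is excluded by a divisibility argument: one of $|F|,|H|$ must be divisible by $rs$, while the complementary factor must satisfy the usual Frobenius congruence and lie inside the order component $q^3(q^2-1)/(3,q+1)$; a direct numerical check using the explicit values of $r,s$ shows no such split of $|U_3(q)|$ is admissible. The $2$-Frobenius alternative is ruled out analogously, since it would require a nilpotent $\{r,s\}$-subgroup of order $rs$ together with a solvable Frobenius action whose order fits inside $q^3(q^2-1)/(3,q+1)$, which fails for each of the eleven $q$.

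Third, invoking case~(3) of Proposition~\ref{prop2}, there exists a nonabelian finite simple group $P$ with disconnected prime graph and $P\leq G/K\leq {\rm Aut}(P)$ such that $\omega_2(G)=\omega_j(P)$ for some $j\geq 2$; in particular $(q^2-q+1)/(3,q+1)$ appears as an order component of $P$. Consulting the Williams--Kondrat'ev classification of finite simple groups with disconnected prime graph, together with the explicit formulas for their order components and the restriction $|P|\bigm| |U_3(q)|$, reduces the list of candidates for $P$ to a short explicit set in each of the eleven cases. For Lie-type candidates in coprime characteristic, the Bang--Zsigmondy theorem (Theorem~\ref{zsig}) forces both $r$ and $s$ to be primitive prime divisors of the same cyclotomic value of the defining field, which combined with the order bound collapses each list to the single possibility $P=U_3(q)$; this case analysis is carried out $q$-by-$q$ in the spirit of Lemma~\ref{31-97}. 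Alternating and sporadic candidates are dismissed immediately by a list check against the orders $|U_3(q)|$. Once $P=U_3(q)$, the chain $|U_3(q)|=|P|\leq |G/K|\leq |G|=|U_3(q)|$ forces $K=1$ and $G/K=P$, hence $G\cong U_3(q)$.

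The main obstacle is the third step: for each of the eleven values of $q$ one must enumerate all finite simple groups whose order divides $|U_3(q)|$ and one of whose order components equals $(q^2-q+1)/(3,q+1)$. The primes $r,s\in\pi_2(U_3(q))$ computed above are large relative to $q$, and this size asymmetry, fed through the multiplicative orders of the characteristic modulo $r$ and $s$, is precisely what forces the characteristic and field size of any Lie-type candidate to coincide with those of $U_3(q)$; the routine but case-heavy verification of this for each $q$ is where the bulk of the work sits.
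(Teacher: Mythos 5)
Your plan hinges on the claim that the order and degree pattern alone force ${\rm GK}(G)$ to be disconnected with $\pi_2(G)=\pi\bigl((q^2-q+1)/(3,q+1)\bigr)$, and this first step fails. A degree pattern does not determine a prime graph. For $q=31$ the degree pattern is $(3,2,2,1,1,1)$ on the primes $2,3,5,7,19,31$: here \emph{three} vertices have degree one (the vertex $31\in\pi_1$ is joined only to $2$), so your assertion that the two primes of $\pi_2$ are the only degree-one vertices is already false; worse, this degree sequence is realized by a labelled tree (edges $2$--$3$, $2$--$5$, $2$--$7$, $3$--$19$, $5$--$31$), so a group with this order and degree pattern could a priori have a \emph{connected} prime graph. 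For $q=47$ the pattern $(3,1,1,1,1,1)$ does force disconnectedness by edge count, but the realization is a star plus one isolated edge, and nothing in the degree data says the isolated edge is $\{7,103\}$ rather than, say, $\{3,23\}$. Even for $q=37$, where exactly the primes $31$ and $43$ have degree one, the labelled sequence admits a connected realization (edges $2$--$3$, $2$--$19$, $2$--$31$, $19$--$37$, $19$--$43$, $3$--$37$), so you cannot conclude $31\sim 43$. Since the whole of your steps two and three (the Gruenberg--Kegel trichotomy, the exclusion of Frobenius and $2$-Frobenius groups, and the classification of simple groups with the prescribed order component) presupposes that ${\rm GK}(G)$ is disconnected with the correct second component, the argument does not get off the ground.

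The paper avoids this trap entirely: it never tries to prove disconnectedness. Instead it checks the numerical hypothesis $d_1+d_{d_1+2}\leqslant |\pi(G)|-3$ of Lemma \ref{nonsolvable} to obtain $t(G)\geqslant 3$, notes $t(2,G)\geqslant 2$ from $d_G(2)<|\pi(G)|-1$, and invokes the Vasil'ev--Gorshkov Lemma \ref{vasi}, which produces the sandwiched simple group $P$ whether or not ${\rm GK}(G)$ is connected. Only afterwards does it split into the cases $r\sim s$ and $r\nsim s$ for the two primes $r,s$ dividing $(q^2-q+1)/(3,q+1)$: in the first case the prime graph and hence the order components of $G$ coincide with those of $U_3(q)$ and the order-component characterization of \cite{IKA} finishes; in the second a Frattini/Hall-subgroup argument shows $K$ and ${\rm Out}(P)$ are $\{r,s\}'$-groups, forcing $rs$ to divide $|P|$ and pinning down $P\cong U_3(q)$, after which a contradiction with the assumed nonadjacency closes that branch. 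If you want to salvage your route, you would have to supply a group-theoretic (not purely combinatorial) argument that $r\sim s$ cannot fail, which is in effect what the paper's second case does.
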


Before we start the proof of Theorem \ref{OD-U(3,q)}  we need some facts on
these 3-dimensional unitary groups. Fist of all, we note that  $$|U_3(q)|=\frac{1}{d}q^3(q^2-1)(q^3+1),$$ where
$d=(3, q+1)$. For the convenience of the reader, we have listed the orders and degree patterns of simple unitary groups $U_3(q)$ for  $q\in \{31, 37, 43, 47, 49, 59, 61, 64, 73, 89, 97\}$ in Table 4.

\begin{center}
{\bf Table 4.}  The order and degree pattern of some simple unitary groups.
$$\begin{array}{llll}
\hline
G & |G| & \mu(G) & D(G) \\[0.3cm]
\hline  & & & \\[-0.3cm]
U_3(31) &  2^{11} \cdot 3\cdot 5 \cdot 7^2 \cdot 19 \cdot 31^3  &\{7^2\cdot 19, 2^6\cdot 3\cdot 5, 2^5\cdot 31\} & (3, 2, 2, 1, 1, 1)  \\[0.2cm]
U_3(37) &  2^4 \cdot 3^2 \cdot 19^2 \cdot 31 \cdot 37^3 \cdot 43  & \{31\cdot 43, 2^3\cdot 3^2\cdot 19, 2\cdot 19\cdot 37\}  & (3, 2, 3, 1, 2, 1)  \\[0.2cm]
U_3(43) &  2^5 \cdot 3 \cdot 7  \cdot 11^2 \cdot 13  \cdot 43^3 \cdot 139  & \{13\cdot 139, 2^3\cdot 3\cdot 7\cdot 11, 2^2\cdot 11\cdot 43\} &(4, 3, 3, 4, 1, 2, 1)  \\[0.2cm]
U_3(47) &  2^9 \cdot 3^2 \cdot 7 \cdot 23 \cdot 47^3 \cdot 103 & \{7\cdot 103, 2^5\cdot 23, 2^4\cdot 47, 2^4\cdot 3\}&(3, 1, 1, 1, 1, 1)  \\[0.2cm]
U_3(49) &  2^6 \cdot 3 \cdot 5^4 \cdot 7^6 \cdot 13 \cdot 181  & \{13\cdot 181, 2^5\cdot 3\cdot 5^2,  2\cdot 5^2\cdot 7\}&(3, 2, 3, 2, 1, 1)  \\[0.2cm]
U_3(59) &  2^5 \cdot 3^2\cdot 5^2 \cdot 7 \cdot 29 \cdot 59^3 \cdot 163  & \{7\cdot 163, 2^3\cdot 5\cdot 29, 2^2\cdot 5\cdot 59, 2^2\cdot 3\cdot 5\} &(4, 2,  4, 1, 2, 2, 1)  \\[0.2cm]
U_3(61) &  2^4 \cdot 3 \cdot 5 \cdot 7 \cdot 31^2 \cdot 61^3 \cdot 523  & \{7\cdot 523, 2^3\cdot 3\cdot 5\cdot 31,  2\cdot 31\cdot 61\} &(4, 3, 3, 1, 4, 2, 1)  \\[0.2cm]
U_3(64) &  2^{18} \cdot 3^2 \cdot 5^2 \cdot 7 \cdot 13^2 \cdot 37 \cdot 109  & \{37\cdot 109, 3^2\cdot 5\cdot 7\cdot 13,  2\cdot 5\cdot 13\}&(2, 3, 4, 3, 4, 1, 1)  \\[0.2cm]
U_3(73) &  2^5 \cdot 3^2 \cdot 7 \cdot 37^2 \cdot 73^3 \cdot 751  & \{7\cdot 751, 2^4\cdot 3^2\cdot 37, 2\cdot 37\cdot 73\}&(3, 2, 1, 3, 2, 1)  \\[0.2cm]
U_3(89) &  2^5 \cdot 3^4 \cdot 5^2 \cdot 7 \cdot 11 \cdot 89^3 \cdot 373  & \{7\cdot 373, 2^4\cdot 3\cdot 5\cdot 11, 2\cdot 3\cdot 5\cdot 89, 2\cdot 3^2\cdot 5\} & (4, 4, 4, 1, 3, 3, 1)  \\[0.2cm]
U_3(97) &  2^7 \cdot 3 \cdot 7^4 \cdot 67 \cdot 97^3 \cdot 139  &  \{67\cdot 139, 2^6\cdot 3\cdot 7^2, 2\cdot 7^2\cdot 97\}& (3, 2, 3, 1, 2, 1)  \\[0.2cm]
\hline
\end{array}
$$
\end{center}

We will also use the following lemma in \cite{BAk2012}.

\begin{lm}\label{nonsolvable}
Let $G$ be a finite group with $n=|\pi(G)|$ and let $d_1\leqslant d_2\leqslant
\cdots\leqslant d_n$ be the degree sequence of ${\rm GK}(G)$. If
$d_1+d_{d_1+2}\leqslant n-3$, then
$t(G)\geqslant 3$.
\end{lm}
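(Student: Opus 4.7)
The plan is to reduce the statement to Lemma \ref{lemma3.6} by exhibiting an explicit pair of nonadjacent vertices $u,v$ in ${\rm GK}(G)$ whose degree sum is bounded by $n-3$. Label the vertices $v_1,v_2,\ldots,v_n$ so that $\deg_{{\rm GK}(G)}(v_i)=d_i$ for each $i$. The vertex $v_1$ has minimum degree $d_1$, so it has exactly $d_1$ neighbours and $n-1-d_1$ non-neighbours in ${\rm GK}(G)$.

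Now look at the $d_1+1$ vertices $v_2,v_3,\ldots,v_{d_1+2}$. Since $v_1$ has only $d_1$ neighbours, by a pigeonhole argument this list cannot be contained in the neighbourhood of $v_1$; hence there exists an index $j$ with $2\leqslant j\leqslant d_1+2$ such that $v_j$ is not adjacent to $v_1$. For this $j$ we have $\deg_{{\rm GK}(G)}(v_j)=d_j\leqslant d_{d_1+2}$, and therefore
\[
\deg_{{\rm GK}(G)}(v_1)+\deg_{{\rm GK}(G)}(v_j)\leqslant d_1+d_{d_1+2}\leqslant n-3,
\]
by hypothesis.

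Applying Lemma \ref{lemma3.6} to the pair $(v_1,v_j)$ in the graph ${\rm GK}(G)$ (which has $n$ vertices) yields $\alpha({\rm GK}(G))\geqslant 3$, i.e.\ $t(G)\geqslant 3$, as required.

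No step seems to present a real obstacle here; the only thing worth double-checking is the book-keeping of the indices — specifically that $v_1$ itself is genuinely among the $d_1+2$ smallest-degree vertices (so that the $d_1+1$ vertices $v_2,\ldots,v_{d_1+2}$ are honestly distinct from $v_1$), which follows directly from the choice of labelling that places the vertex of minimum degree first.
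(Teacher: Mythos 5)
Your proof is correct: the pigeonhole step producing a non-neighbour $v_j$ of $v_1$ with $j\leqslant d_1+2$ is valid, and the reduction to Lemma \ref{lemma3.6} closes the argument. The paper itself gives no proof of this lemma (it is quoted from \cite{BAk2012}), so there is nothing to compare against; your derivation is the natural self-contained one.
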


\noindent {\em Proof of Theorem \ref{OD-U(3,q)}}. Suppose $G$ is a finite group satisfying the following conditions:
$$ (1) \  |G|=|U_3(q)|=\frac{1}{d}q^3(q^2-1)(q^3+1)  \ \ \ \ \  \mbox{and}  \ \ \ \ \  (2) \ D(G)=D(U_3(q)),$$
where $q\in \{31, 37, 43, 47, 49, 59, 61, 64, 73, 89, 97\}$. Let  $d_1\leqslant d_2\leqslant \cdots \leqslant d_{|\pi(G)|}$ be the degrees of vertices of  ${\rm GK}(U_3(q))$. Using Table 5, in all cases, we see that $d_1=1$ and $d_3\leqslant |\pi(G)|-4$, and hence by Lemma \ref{nonsolvable}, $t(G)\geqslant 3$.  Moreover, in all cases, $t(2,G)\geqslant  2$ because $d_G(2)<|\pi(G)|-1$. Thus, by Lemma
\ref{vasi}, there exists a simple group $P$ such that $P\leqslant
G/K\leqslant {\rm Aut}(P)$, where $K$ is the maximal normal
solvable subgroup of $G$. Note that $|P|$ divides $|G|$, and so $P\in \mathfrak{S}(p)$, where $p=\max \pi(G)$. In the following, we will prove that $P\cong U_3(q)$, which implies that $K=1$ and since $|G| = |U_3(q)|$, $G$ is isomorphic to $U_3(q)$, as required.

\begin{center}
{\bf Table 5}. {\em   The values of $d_1$ and $d_{d_1+2}$ in ${\rm GK}(U_3(q))$ for some $q$}. \\[0.3cm]
$\begin{tabular}{ccllcc} \hline
&& & & \\[-0.26cm]
$S$ &  & $ d_1\leqslant \cdots\leqslant d_{|\pi(S)|}$ & $d_1$ & $d_{d_1+2}$ & $|\pi(S)|-3$ \\[0.1cm]
\hline\hline &&& & & \\[-0.36cm]
$U_3(31)$ &  & $(1, 1, 1, 2, 2, 3)$ & $1$   & $1$& $3$\\[0.1cm]
$U_3(37)$ &  & $(1, 1, 2, 2, 3, 3)$  & $1$& $2$ & $3$ \\[0.1cm]
$U_3(43)$ &  &$(1, 1, 2, 3, 3, 4, 4)$   & $1$ & 2 & $4$ \\[0.1cm]
$U_3(47)$ &   & $(1, 1, 1, 1, 1, 3)$ & $1$ & $1$& $3$\\[0.1cm]
$U_3(49)$  &  & $(1, 1, 2, 2, 3, 3)$ & $1$ & $2$ & $3$ \\[0.1cm]
$U_3(59)$ & & $(1, 1, 2, 2, 2, 4)$  & $1$& $2$ & $3$\\[0.1cm]
$U_3(61)$ &  & $(1, 1, 2, 3, 3, 4, 4)$  & $1$ & $2$& $4$\\[0.1cm]
$U_3(64)$ &  & $(1, 1, 2, 3, 3, 4, 4)$ & $1$ &  $2 $& $4$ \\[0.1cm]
$U_3(73)$ &  & $(1, 1, 2, 2, 3, 3)$ & $1$ & $2$ & $3$\\[0.1cm]
$U_3(89)$ &  & $(1, 1, 3, 3, 4, 4, 4)$ & $1$& $3$ & $4$ \\[0.1cm]
$U_3(97)$ & &  $(1, 1, 2, 2, 3, 3)$ & $1$ & $2$ & $3$\\[0.1cm]
\hline \end{tabular} $
\end{center}

We only consider two cases $q=31$ and $q=61$, and the other cases go similarly.

\noindent (a) {\em $q=31$}.
We distinguish two cases
separately.
\begin{itemize}
\item[{\rm (1)}] {\em Assume first that $7\sim 19$ in ${\rm GK}(G)$}.  In
this case we immediately imply that the prime graphs of
$G$ and $U_3(31)$ coincide (see Fig. 6).
Since ${\rm GK}(G)={\rm
GK}(U_3(31))$,  the hypothesis $|G|=|U_3(31)|$ implies that
${\rm OC}(G)={\rm OC}(U_3(31))$. Now, by the Main Theorem in
\cite{IKA}, $G$ is isomorphic to $U_3(31)$, as
required.

\item[{\rm (2)}] {\em Assume next that $7\nsim 19$ in ${\rm GK}(G)$}. In this case, we will show that $|P|$ is divisible by $7^2\cdot 19$.
Let  $\{r, s\}=\{7, 19\}$.  We first claim that $K$ is a $\{7, 19\}'$-group.
If $\{7, 19\}\subseteq \pi(K)$, then a Hall $\{7, 19\}$-subgroup of
$K$ would be an abelian group. Hence $7\sim 19$ in ${\rm GK}(K)$, and
so in ${\rm GK}(G)$, which is a contradiction. Suppose now that  $r\in\pi(K)$, $s\notin\pi(K)$ and $T\in
{\rm Syl}_{r}(K)$. By the Frattini argument,  $G=KN_G(T)$. This shows that
the normalizer $N_G(T)$ contains an element of order $s$, say $x$.
Then, $T\langle x\rangle$ is an abelian subgroup of $G$, which
leads to a contradiction as before. Thus  $K$ is a $\{7, 19\}'$-group, as claimed.
Obviously this forces that $|{\rm Aut}(P)|$ is divisible by $7^2\cdot 19$.
On the other hand, by \cite[Corollary 2.6]{kogani},
${\rm Out}(P)$ is also a $\{7, 19 \}'$-group, which implies that $|P|$ is divisible by $7^2\cdot 19$.
Considering the orders of simple groups in
$\mathfrak{S}(31)$, we conclude that $P$ is isomorphic
to $U_3(31)$, and so $K=1$ and since $|G|=|U_3(31)|$, $G$ is
isomorphic to $U_3(31)$. But then ${\rm GK}(G)={\rm GK}(U_3(31))$
is disconnected, which is impossible.
\end{itemize}

\noindent (b) {\em $q=61$}.
Again we consider  two cases
separately.
\begin{itemize}
\item[{\rm (1)}] {\em Assume first that $7\sim 523$ in ${\rm GK}(G)$}.  In
this case,  the group $G$  and the simple unitary group $U_3(61)$ have the same prime graph (see Fig. 7). Since  $|G|=|U_3(61)|$ we conclude that
${\rm OC}(G)={\rm OC}(U_3(61))$. Now, by the Main Theorem in
\cite{IKA}, $G$ is isomorphic to $U_3(61)$, as
required.

\item[{\rm (2)}] {\em Assume next that $7\nsim 523$ in ${\rm GK}(G)$}.
In the sequel, we will show that $|P|$ is divisible by $7\cdot 523$.
Let  $\{r, s\}=\{7, 523\}$.  We first claim that $K$ is a $\{7, 523\}'$-group.
If $\{7, 523\}\subseteq \pi(K)$, then a Hall $\{7, 523\}$-subgroup of
$K$ would be a cyclic group. Hence $7\sim 523$ in ${\rm GK}(K)$, and
so in ${\rm GK}(G)$, which is a contradiction. Suppose now that  $r\in\pi(K)$, $s\notin\pi(K)$ and $T\in
{\rm Syl}_{r}(K)$. By the Frattini argument,  $G=KN_G(T)$. This shows that
the normalizer $N_G(T)$ contains an element of order $s$, say $x$.
Then, $T\langle x\rangle$ is an abelian subgroup of $G$, which
leads to a contradiction as before. Thus  $K$ is a $\{7, 523\}'$-group, as claimed.
Obviously this forces that $|{\rm Aut}(P)|$ is divisible by $7\cdot 523$.
On the other hand, by \cite[Corollary 2.6]{kogani},
${\rm Out}(P)$ is also a $\{7, 523\}'$-group, which implies that $|P|$ is divisible by $7\cdot 523$.
Considering the orders of simple groups in
$\mathfrak{S}(61)$, we conclude that $P$ is isomorphic
to $U_3(61)$, and so $K=1$ and since $|G|=|U_3(61)|$, $G$ is
isomorphic to $U_3(61)$. But then ${\rm GK}(G)={\rm GK}(U_3(61))$
is disconnected, which is impossible. $\Box$
\end{itemize}

\setlength{\unitlength}{4.5mm}
\begin{picture}(0,0)(-16,8)
\put(8,0){\circle*{0.35}}%
\put(14,0){\circle*{0.35}}%
\put(11,2){\circle*{0.35}}%
\put(11,4){\circle*{0.35}}%
\put(11,6){\circle*{0.35}}%
\put(14,4){\circle*{0.35}}%
\put(8,0){\line(1,0){6}}
\put(8,0){\line(3,2){3}}
\put(8,0){\line(3,4){3}}
\put(8,0){\line(1,2){3}}
\put(11,6){\line(0,-1){2}}
\put(14,0){\line(-3,2){3}}
\put(14,0){\line(-3,4){3}}
\put(14,0){\line(-1,2){3}}
\put(7.8,-0.85){\footnotesize 2}%
\put(13.7,-0.85){\footnotesize $31$}%
\put(10.75,1){\footnotesize $61$}%
\put(10.75,3){\footnotesize $3$}%
\put(10.75,6.5){\footnotesize $5$}%
\put(14.5,3.8){\footnotesize $\{7, 523\}$}%
\put(5,-2.5){\footnotesize {\bf Fig. 7.} \ The prime graph of $U_3(61)$.}
\put(-10,0){\circle*{0.35}}%
\put(-7,2){\circle*{0.35}}%
\put(-7,4){\circle*{0.35}}%
\put(-7,6){\circle*{0.35}}%
\put(-4,4){\circle*{0.35}}%
\put(-10,0){\line(3,2){3}}
\put(-10,0){\line(3,4){3}}
\put(-10,0){\line(1,2){3}}
\put(-7,6){\line(0,-1){2}}
\put(-10.2,-0.85){\footnotesize 2}%
\put(-7.25,1){\footnotesize $31$}%
\put(-7.25,3){\footnotesize $3$}%
\put(-7.25,6.5){\footnotesize $5$}%
\put(-3.5,3.8){\footnotesize $\{7, 19\}$}%

\put(-12,-2.5){\footnotesize {\bf Fig. 6.} \ The prime graph of $U_3(31)$.}
\end{picture}
\vspace{5cm}


\section{\sc OD-Characterization of 4-Dimensional Unitary Groups}
As mentioned in the introduction, it was already known that
$h(U_4(q))=1$ for $q=3, 4, 5, 7, 8$, and $17$ (see \cite{BAk(sub), MAk.Rah, moh, atmost17, SZ-2008}),
while $h(U_4(2))=2$ (\cite{moh, regular}).  For the  simple unitary groups $U_4(q)$, with $9\leqslant q\leqslant  97$,
we have determined their orders and degree patterns in Table 6.

The principal aim of this  section is to argue for the validity of the following theorem.
\begin{theorem}\label{OD-U(4,q)}
The simple unitary group $U_4(q)$ for a prime power $9\leqslant q\leqslant  97$ is OD-characterizable.
\end{theorem}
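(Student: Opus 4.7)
The plan is to mirror the proof strategy of Theorem~\ref{OD-U(3,q)} in the 4-dimensional setting. Suppose $G$ is a finite group satisfying $|G|=|U_4(q)|$ and $D(G)=D(U_4(q))$ for a prime power $9\leqslant q\leqslant 97$. Reading off the degree sequence from Table~6, I would first verify the inequality $d_1+d_{d_1+2}\leqslant |\pi(G)|-3$ case by case, so that Lemma~\ref{nonsolvable} gives $t(G)\geqslant 3$, and then check that $d_G(2)<|\pi(G)|-1$, which yields $t(2,G)\geqslant 2$. Lemma~\ref{vasi} then produces a nonabelian simple group $P$ with $P\leqslant G/K\leqslant {\rm Aut}(P)$, where $K$ is the maximal normal solvable subgroup of $G$; since $|P|$ divides $|G|$, we have $P\in \mathfrak{S}(p)$ with $p=\max \pi(G)$.

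The heart of the argument is to pin down $P$ as $U_4(q)$. For each $q$ I would select an independent pair $\{r,s\}\subseteq \pi(U_4(q))$ in ${\rm GK}(U_4(q))$, where $r$ is a large primitive prime divisor (typically $r=r_6(q)$ or $r_4(q)$) and $s$ is a companion prime whose nonadjacency to $r$ is forced by the degree pattern (possibly using Lemma~\ref{coro2} to extract this from $D(G)$ when needed). The Frattini-argument dichotomy used in Section~4 carries over verbatim: if $\{r,s\}\subseteq \pi(K)$, a Hall $\{r,s\}$-subgroup of the solvable group $K$ is cyclic, giving $r\sim s$ in ${\rm GK}(G)$, a contradiction; and if exactly one of $r,s$ lies in $\pi(K)$, say $r\in\pi(K)$ with $T\in {\rm Syl}_r(K)$, then $G=KN_G(T)$ produces an element of order $s$ normalizing $T$ and hence an abelian $\{r,s\}$-subgroup of $G$, again contradicting $r\nsim s$. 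Therefore $K$ is a $\{r,s\}'$-group, and since ${\rm Out}(P)$ is also a $\{r,s\}'$-group by \cite[Corollary 2.6]{kogani}, the product $rs$ divides $|P|$.

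For the eight values $q\in\{49,59,61,67,71,79,81,83\}$, Lemma~\ref{31-97} immediately reduces $P$ to one of at most three simple groups of Lie type, and every candidate other than $U_4(q)$ can be eliminated by comparing orders with $|U_4(q)|$ directly. For the remaining values $q\in \{9, 11, 13, 16, 19, 23, 25, 27, 29, 31, 32, 37, 41, 43, 47, 53, 89, 97\}$, I would carry out the analogous case analysis: Bang--Zsigmondy (Theorem~\ref{zsig}) controls which Lie type groups can contain $r$ in their order, and the sporadic and alternating candidates in $\mathfrak{S}(p)$ are ruled out by direct order inspection using the atlas. Once $P\cong U_4(q)$ is established, the equality $|P|=|U_4(q)|=|G|$ forces $K=1$ and $G=P$, so $G\cong U_4(q)$; note that no disconnectedness contradiction needs to be checked here, since ${\rm GK}(U_4(q))$ is connected for all $q\geqslant 4$.

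The principal obstacle I anticipate is the small-$q$ cases where Lemma~\ref{31-97} does not apply: for some of these values the primitive prime divisors of $q^6-1$ or $q^4-1$ are not large enough on their own to single out a unique member of $\mathfrak{S}(p)$, so the choice of the independent pair $\{r,s\}$ must be made carefully, and some sporadic groups (such as $J_1$, $M_{11}$, $M_{12}$, $HS$, $McL$, etc.) whose prime spectra happen to fit inside $\pi(U_4(q))$ have to be excluded one by one. A secondary technical annoyance is that for several $q$ the degree pattern alone may not immediately prescribe the required nonadjacency $r\nsim s$; in those cases I would split into two subcases according to whether $r\sim s$ in ${\rm GK}(G)$, handling the ``$r\sim s$'' branch by showing ${\rm GK}(G)={\rm GK}(U_4(q))$ and matching order components via the Main Theorem of \cite{IKA}, and the ``$r\nsim s$'' branch by the Frattini-argument scheme above.
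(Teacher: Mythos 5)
There is a genuine gap at the very first step of your plan: the inequality $d_1+d_{d_1+2}\leqslant |\pi(G)|-3$ of Lemma~\ref{nonsolvable}, which you propose to ``verify case by case'' for all $29$ values of $q$, in fact \emph{fails} for roughly half of them. For example, for $q=9$ the sorted degree sequence is $(1,1,2,3,3)$ with $|\pi(G)|=5$, so $d_1+d_3=3>2$; for $q=71$ it is $(2,2,4,5,5,5,5)$ with $|\pi(G)|=7$, so $d_1+d_4=7>4$. The same happens for $q\in\{9,11,16,19,23,25,29,32,41,49,64,71,79,89\}$, and for these values your argument never reaches $t(G)\geqslant 3$, so Lemma~\ref{vasi} cannot be invoked and the whole identification of $P$ collapses. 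The paper closes exactly this hole with two extra devices you do not have: for one family it first proves ${\rm GK}(G)$ is connected via Lemma~\ref{connect} (using $\Delta(G)+\delta(G)\geqslant|\pi(G)|-1$) and then argues that the induced subgraph on $D_{\delta}(G)$ cannot be complete (else it would be a connected component), producing a nonadjacent pair with degree sum $2\delta\leqslant|\pi(G)|-3$ to feed into Lemma~\ref{lemma3.6}; for the other family it uses Lemma~\ref{coro2} on the two vertices of $D_1(G)\cup D_2(G)$ to force their nonadjacency and again applies Lemma~\ref{lemma3.6}.

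A secondary problem is your fallback for the ``$r\sim s$'' branch: invoking the Main Theorem of \cite{IKA} to match order components cannot work here, since that result characterizes $U_3(q)$ (whose prime graph is disconnected and hence has several order components), whereas ${\rm GK}(U_4(q))$ is connected for all $q$ in your range, so $G$ has a single order component, namely $|G|$, and ``matching order components'' yields nothing beyond the hypothesis $|G|=|U_4(q)|$. The paper avoids ever needing this branch: for each $q$ it chooses the set $\pi$ of primes (sometimes of size $3$ or $4$, see Table~7, with a correspondingly more elaborate Frattini analysis of the cases $|\pi\cap\pi(K)|=1,2$) so that the required nonadjacencies are already forced by the degree pattern together with the connectivity information, e.g.\ by enumerating the possible realizations of the degree sequence as in Fig.~8. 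Your identification of $P$ for the eight large values of $q$ via Lemma~\ref{31-97} and the divisibility of $|{\rm Out}(P)|$, and for the remaining values via Zavarnitsine's lists, does match the paper; but as written the proposal does not prove the theorem for the fourteen values listed above.
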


\begin{proof}  Suppose that
$G$ is a finite group such that:
$$ (1) \  |G|=|U_4(q)|=\frac{1}{d}q^6(q^2-1)(q^3+1)(q^4-1)  \ \ \ \ \  \mbox{and}  \ \ \ \ \  (2) \ D(G)=D(U_4(q)),$$
where $q$ is a prime power with  $9\leqslant q\leqslant 97$ and $d=(4, q+1)$. Using Table 6, in all cases, we see that $\Delta (G)+\delta(G)\geqslant |\pi(G)|-1$, and hence by Lemma \ref{connect}, ${\rm GK}(G)$ is connected.  Moreover, in all cases, $t(2,G)\geqslant  2$ because $d_G(2)<|\pi(G)|-1$. We now claim that $t(G)\geqslant 3$.  To prove this, we consider the following cases:

\begin{center}
{\bf Table 6.}  The order and degree pattern of some simple unitary groups.
$$\begin{array}{lll}
\hline
\mbox{Group} & \mbox{Order}  & \mbox{Degree Pattern} \\[0.23cm]
\hline  & &  \\[-0.3cm]
U_4(9) &  2^{9} \cdot 3^{12}\cdot 5^3 \cdot 41 \cdot 73    & (3, 2, 3, 1, 1)  \\[0.23cm]
U_4(11) &  2^7 \cdot 3^4 \cdot 5^2 \cdot 11^6 \cdot 37 \cdot 61   & (3, 4, 4, 3, 1, 1)  \\[0.23cm]
U_4(13) &  2^7 \cdot 3^2 \cdot 5  \cdot 7^3 \cdot 13^6  \cdot 17 \cdot 157   & (5, 5, 3, 4, 2, 3, 1)  \\[0.23cm]
U_4(16) &  2^{24} \cdot 3^2 \cdot 5^2 \cdot 17^3 \cdot 241 \cdot 257  & (3, 4, 4, 4, 1, 2)  \\[0.23cm]
U_4(17) &  2^{11} \cdot 3^7 \cdot 5 \cdot 7 \cdot 13 \cdot 17^6\cdot 29  &  (4, 4, 2, 2, 2, 2, 2)  \\[0.23cm]
U_4(19) &  2^7 \cdot 3^4 \cdot 5^3 \cdot 7^3 \cdot 19^6 \cdot 181   & (3, 4, 4, 1, 3, 1)  \\[0.23cm]
U_4(23) &  2^{10} \cdot 3^4\cdot 5 \cdot 11^2 \cdot 13^2 \cdot 23^6 \cdot 53   & (4, 4, 2, 5, 2, 3, 2)  \\[0.23cm]
U_4(25) &  2^9 \cdot 3^2 \cdot 5^{12} \cdot 13^3 \cdot 313 \cdot 601   & (4, 4, 3, 4, 2, 1)  \\[0.23cm]
U_4(27) &  2^7 \cdot 3^{18} \cdot 5 \cdot 7^3 \cdot 13^2 \cdot 19 \cdot 37 \cdot 73  & (3, 3, 2, 5, 5, 2, 2, 2)  \\[0.23cm]
U_4(29) &  2^7 \cdot 3^4 \cdot 5^3 \cdot 7^2 \cdot 29^6 \cdot 271 \cdot 421   & (5, 5, 5, 5, 4, 2, 2)  \\[0.23cm]
U_4(31) &  2^{16} \cdot 3^2 \cdot 5^2 \cdot 7^2 \cdot 13 \cdot 19 \cdot 31^6 \cdot 37   & (5, 5, 5, 2, 3, 2, 3, 3)  \\[0.23cm]
U_4(32) &  2^{30} \cdot 3^4 \cdot 5^2 \cdot 11^3 \cdot 31^2 \cdot 41\cdot 331   & (3, 4, 2, 4, 5, 2, 2)  \\[0.23cm]
U_4(37) &  2^7 \cdot 3^4 \cdot 5 \cdot 19^3 \cdot 31 \cdot 37^6 \cdot 43 \cdot 137  & (5, 5, 3, 5, 2, 3, 2, 3)  \\[0.23cm]
U_4(41) &  2^9 \cdot 3^4 \cdot 5^2 \cdot 7^3 \cdot 29^2 \cdot 41^6\cdot 547   & (5, 5, 5, 5, 2, 4, 2)  \\[0.23cm]
U_4(43) &  2^7 \cdot 3^2 \cdot 5^2 \cdot 7^2 \cdot 11^3 \cdot 13\cdot 37\cdot 43^6 \cdot 139   & (4, 6, 3, 6, 6, 2, 3, 4, 2)  \\[0.23cm]
U_4(47) &  2^{13} \cdot 3^4 \cdot 5 \cdot 7 \cdot 13 \cdot 17\cdot 23^2 \cdot 47^6 \cdot 103   & (5, 5, 3, 3, 3, 3, 6, 3, 3)  \\[0.23cm]
U_4(49) &  2^{11} \cdot 3^2 \cdot 5^6 \cdot 7^{12} \cdot 13 \cdot 181\cdot 1201    & (4, 4, 5, 3, 2, 2, 2)  \\[0.23cm]
U_4(53) &  2^7 \cdot 3^{10} \cdot 5 \cdot 13^2 \cdot 53^6 \cdot 281\cdot 919   & (5, 4, 3, 5, 3, 3, 1)  \\[0.23cm]
U_4(59) &  2^7 \cdot 3^4 \cdot 5^3 \cdot 7 \cdot 29^2 \cdot 59^6 \cdot 163 \cdot 1741   & (4, 6, 6, 3, 5, 4, 3, 1)  \\[0.23cm]
U_4(61) &  2^{7} \cdot 3^2 \cdot 5^2 \cdot 7 \cdot 31^3 \cdot 61^6\cdot 523 \cdot 1861   & (5, 5, 5, 2, 6, 4, 2, 1)  \\[0.23cm]
U_4(64) &  2^{36} \cdot 3^4 \cdot 5^3 \cdot 7^2 \cdot 13^3 \cdot 17\cdot 37 \cdot 109\cdot 241   & (4, 6, 6, 6, 6, 3, 3, 3, 3)  \\[0.23cm]
U_4(67) &  2^7 \cdot 3^2 \cdot 5 \cdot 11^2 \cdot 17^3 \cdot 67^6\cdot 449 \cdot 4423  & (4, 6, 3, 6, 5, 4, 3, 1)  \\[0.23cm]
U_4(71) &  2^{10} \cdot 3^7 \cdot 5^2 \cdot 7^2 \cdot 71^6 \cdot 1657\cdot 2521   & (5, 5, 5, 5, 4, 2, 2)  \\[0.23cm]
U_4(73) &  2^{9} \cdot 3^4 \cdot 5 \cdot 7\cdot  13 \cdot 37^3 \cdot 41\cdot 73^6\cdot 751   & (6, 6, 4, 2, 4, 5, 4, 3, 2)  \\[0.23cm]
U_4(79) &  2^{13} \cdot 3^2 \cdot 5^3 \cdot 13^2 \cdot 79^6 \cdot 3121\cdot 6163   & (5, 5, 5, 5, 4, 2, 2)  \\[0.23cm]
U_4(81) &  2^{11} \cdot 3^{24} \cdot 5^2 \cdot 17 \cdot 41^3 \cdot 193\cdot 6481   & (5, 3, 5, 3, 4, 3, 1)  \\[0.23cm]
U_4(83) &  2^7 \cdot 3^4 \cdot 5 \cdot 7^3 \cdot 13 \cdot 41^2\cdot 53 \cdot 83^6 \cdot 2269  & (4, 5, 3, 5, 3, 7, 3, 4, 2)  \\[0.23cm]
U_4(89) &  2^9 \cdot 3^7 \cdot 5^3 \cdot 7\cdot 11^2 \cdot 17 \cdot 89^6\cdot 233 \cdot 373   & (6, 6, 6, 3, 6, 3, 4, 3, 3)  \\[0.23cm]
U_4(97) &  2^{13} \cdot 3^2 \cdot 5 \cdot 7^6 \cdot 67 \cdot 97^6 \cdot 139 \cdot 941   & (5, 5, 3, 5, 2, 3, 2, 3)  \\[0.23cm]
\hline
\end{array}
$$
\end{center}

\begin{itemize}
\item[{\rm (a)}] $q\in \{13, 17, 27, 31, 37, 43, 47, 53, 59, 61, 67, 73, 81, 83, 97\}$. Let $d_1\leqslant d_2\leqslant \cdots\leqslant d_{|\pi(G)|}$ be the degree sequence of the prime graph ${\rm GK}(G)$. In all cases, Table 6 shows that
$$d_1+d_{d_1+2}\leqslant |\pi(G)|-3,$$ and the claim follows by applying Lemma \ref{nonsolvable}.

\item[{\rm (b)}] $q\in \{9, 11, 19, 23,  32, 49, 64, 89\}$.  Here, we have $2\delta\leqslant |\pi(G)|-3$ and $|D_\delta (G)|>\delta$.  We notice that the induced subgraph on  $D_\delta (G)$  is not complete.  Otherwise,  we obtain $|D_\delta (G)|=\delta+1$,
which shows that $D_\delta (G)\subset \pi(G)$ is a connected component of ${\rm GK}(G)$, a contradiction.
Hence, there are two nonadjacent vertices  $p$ and $q$ in $D_\delta(G)$,  such that
$$\deg(p)+\deg(q)=2\delta\leqslant |\pi(G)|-3.$$
Now the result follows from Lemma \ref{lemma3.6}.

\item[{\rm (c)}] $q\in \{16,  25, 29, 41, 71, 79\}$. In all cases, we have $|D_1(G)\cup D_2(G)|=2$.
Let $D_1(G)\cup D_2(G)=\{p, q\}$.  Using Lemma \ref{coro2} together with several computations,
we conclude that $p$ and $q$ are nonadjacent in ${\rm GK}(G)$.  Since $d_G(p)+d_G(q)\leqslant |\pi(G)|-3$,
it follows from Lemma \ref{lemma3.6} that $t(G)\geqslant 3$, as claimed.
\end{itemize}

Thus, by Lemma
\ref{vasi}, there exists a simple group $P$ such that $P\leqslant
G/K\leqslant {\rm Aut}(P)$, where $K$ is the maximal normal
solvable subgroup of $G$. Note that $|P|$ divides $|G|$, and so $P\in \mathfrak{S}(t)$, where
$t$ is the largest prime dividing $|G|$. In what follows, we will prove that $P\cong U_4(q)$, which implies that $K=1$ and since $|G| = |U_4(q)|$, $G$ is isomorphic to $U_4(q)$, as required.

We distinguish two cases
separately:
\begin{itemize}
\item[{\rm (1)}]  $q\in \{9, 11, 13, 16, 17, 19, 23, 25, 27, 29, 31, 32, 37, 41, 43, 47, 53, 64, 73,  89, 97\}$.
Analysis of different possibilities for $q$
proceeds along similar lines, so, we only handle one case.
Let $q=9$. In this case, we have
$|G|=2^9\cdot 3^{12}\cdot 5^3\cdot 41\cdot 73$ and $D(G)=(3, 2,
3, 1, 1)$.  There are only
two possibilities for the prime graph of $G$ as shown in Fig. 8. Here $\{r, s\}=\{2, 5\}$.

{\setlength{\unitlength}{7mm}
\begin{picture}(1,1)(11,-12)
\linethickness{0.3pt} %
\put(19.1,-14){\circle*{0.3}}
\put(21.9,-14){\circle*{0.3}}
\put(20.5,-12.6){\circle*{0.3}}
\put(24.7,-14){\circle*{0.3}}
\put(23.3,-12.6){\circle*{0.3}}

\put(20.5,-12.2){\small$r$}%
\put(23.2,-12.2){\small$s$}%
\put(18.7,-14.75){\small$41$}%
\put(21.8,-14.75){\small$3$}%
\put(24.5,-14.75){\small$73$}%

\put(19.1,-14){\line(1,1){1.4}}
\put(20.5,-12.6){\line(1,0){2.8}}
\put(20.5,-12.6){\line(1,-1){1.4}}
\put(21.9,-14){\line(1,1){1.4}}
\put(23.3,-12.6){\line(1,-1){1.4}}

\put(16,-16){{\small \bf Fig. 8.} \mbox{All possibilities for
the prime graph of} $G$.}
\end{picture}}
\vspace{3.5cm}

 As before, one can show that $K$
is a $\{41, 73\}'$-group, and so $\{41, 73\}\subseteq \pi(P)$.
Moreover, since $t=73$ and $\pi(P)\subseteq \pi(G)$, we obtain $|P|=2^\alpha\cdot
3^\beta\cdot 5^\gamma\cdot 41\cdot 73$, where $2\leqslant
\alpha\leqslant 12$, $0\leqslant \beta\leqslant 2$ and
$0\leqslant \gamma\leqslant 3$. Now, by considering the orders of simple
groups $\mathfrak{S}(73)$ (see \cite{za}), we observe that the only possibility
for $P$ is $U_4(9)$.
\item[{\rm (2)}]  $q\in \{49, 59, 61, 67, 71, 79,  81, 83\}$.
Let $q=49$. In this case, we have
$|G|=2^{11} \cdot 3^2 \cdot 5^6 \cdot 7^{12} \cdot 13 \cdot 181\cdot 1201 $ and $D(G)= (4, 4, 5, 3, 2, 2, 2) $.
Let $\pi=\{r, s, t\}=\{13, 181, 1201\}$. We claim that  $K$ is a $\pi'$-subgroup of $G$.  If  $\pi\subseteq \pi(K)$,  then a Hall $\pi$-subgroup of $K$ is abelian, and so  $13\sim 181\sim 1201\sim 13$ in ${\rm GK}(G)$. This shows that $\pi$ is a connected component of ${\rm GK}(G)$, which is a contradiction.  We have a similar situation if $|\pi\cap \pi(K)|=2$.  In fact, if $r, s\in \pi(K)$,  then as before $r\sim s$ in ${\rm GK}(G)$. On the other hand, if $R\in {\rm Syl}_r(K)$ and $S\in {\rm Syl}_s(K)$, then
$G=KN_G(R)=KN_G(S)$ by the Frattini argument.  This shows that
the normalizers $N_G(R)$ and  $N_G(S)$  contain an element of order $t$, say $x$.
Then, the subgroups $R\langle x\rangle$  and  $S\langle x\rangle$ are abelian subgroups of $G$, which implies that
$t\sim s$ and $t\sim r$ in ${\rm GK}(G)$. Therefore $r\sim s\sim t\sim r$ in ${\rm GK}(G)$, which
leads to a contradiction as before. We finally assume $|\pi\cap \pi(K)|=1$.  Let $r\in \pi(K)$. In this case,  similar arguments show that  $t\sim r$ and $s\sim r$ in ${\rm GK}(G)$.  Moreover, it follows that $5\sim s$ and $5\sim t$ in ${\rm GK}(G)$, since $d_G(5)=5$. But then $2$ is adjacent to at most  3 vertices, which is impossible because $d_G(2)=4$. Therefore
$K$ is a $\pi'$-subgroup of $G$.

In other cases,  $K$ is a $\pi'$-group, for some set $\pi\subseteq \pi(G)$. In Table 7, we have determined the set $\pi$ for each case.
\begin{center}
{\bf Table 7.}  The set $\pi\subseteq \pi(G)$ for which $K$ is a $\pi'$-group.
$$\begin{array}{lll}
\hline
\mbox{Group} & \mbox{Order}  & \pi \\[0.3cm]
\hline  & &  \\[-0.3cm]
U_4(49) &  2^{11} \cdot 3^2 \cdot 5^6 \cdot 7^{12} \cdot 13 \cdot 181\cdot 1201    & \{13, 181, 1201\}  \\[0.3cm]
U_4(59) &  2^7 \cdot 3^4 \cdot 5^3 \cdot 7 \cdot 29^2 \cdot 59^6 \cdot 163 \cdot 1741   & \{7, 163, 1741\}    \\[0.3cm]
U_4(61) &  2^{7} \cdot 3^2 \cdot 5^2 \cdot 7 \cdot 31^3 \cdot 61^6\cdot 523 \cdot 1861   & \{7, 523, 1861\}    \\[0.3cm]
U_4(67) &  2^7 \cdot 3^2 \cdot 5 \cdot 11^2 \cdot 17^3 \cdot 67^6\cdot 449 \cdot 4423  & \{5, 449, 4423\}    \\[0.3cm]
U_4(71) &  2^{10} \cdot 3^7 \cdot 5^2 \cdot 7^2 \cdot 71^6 \cdot 1657\cdot 2521   & \{1657, 2521\}    \\[0.3cm]
U_4(79) &  2^{13} \cdot 3^2 \cdot 5^3 \cdot 13^2 \cdot 79^6 \cdot 3121\cdot 6163   & \{3121, 6163\}    \\[0.3cm]
U_4(81) &  2^{11} \cdot 3^{24} \cdot 5^2 \cdot 17 \cdot 41^3 \cdot 193\cdot 6481   & \{17, 193, 6481\}    \\[0.3cm]
U_4(83) &  2^7 \cdot 3^4 \cdot 5 \cdot 7^3 \cdot 13 \cdot 41^2\cdot 53 \cdot 83^6 \cdot 2269  &  \{5, 13, 53, 2269\}    \\[0.3cm]
\hline
\end{array}
$$
\end{center} Let $p=\max \pi(G)$.  Then,  Lemma 2.8 in \cite{kogani}, we conclude that $p\in  \pi(P)$. In the case when $q=49$, $1201\in \pi(P)$ and Lemma \ref{31-97} (1) shows that $P$ is isomorphic to one of the following groups:
$$L_2(7^4), \ B_2(7^2), \  U_4(7^2).$$
If $P$ is isomorphic to $L_2(7^4)$ or $B_2(7^2)$, then $181\notin \pi(P)$. This forces  $181\in \pi({\rm Out}(P))$, which is a contradiction.  Therefore, $P$ is isomorphic to $U_4(7^2)$.

Using Lemma \ref{31-97} and similar arguments as the previous case, we can verify that  $P$ is isomorphic to  $U_4(q)$, for $q=59$, $61$, $67$, $71$, $79$,  $81$ or $83$.  The result follows.
\end{itemize}
The proof is complete. \end{proof}

\begin{center}
\begin{tabular}{|l|l|c|l|}
\multicolumn{4}{c}{\textbf{Table 8.}  Simple groups for which the OD-characterization problem is solved.} \\[0.5cm]
\hline $G$ & Conditions on $G$ & $h(G)$ & References \\[0.01cm]
\hline
${\Bbb A}_n$ & $n=p, p+1, p+2$ ($p$ a prime, $p\geqslant 5$) & 1 & \cite{M.Z(2008), degree} \\[0.01cm]
& $5\leqslant n\leqslant 100, \ n\neq10$ & 1 & \cite{H.M(2010), kogani, MR-2011, M.Z(2009), SSWZ}\\[0.01cm]
& $n=106, \ 112, \ 116, \ 134$ & 1 & \cite{chen(2010), chen(2013)}\\[0.01cm]
& $n=10$ & 2 & \cite{connected} \\[0.01cm]
$L_2(q)$ & $q\neq 2, 3$& 1 & \cite{degree, ZS-2012}\\[0.01cm]
$L_3(q)$ & $\left|\pi\left(\frac{q^2+q+1}{d}\right)\right|=1$, $d=(3, q-1)$ & 1& \cite{degree} \\[0.01cm]
$L_4(q)$ & $q\leqslant 37$ & 1 & \cite{BAk2012, four, MAk.Rah} \\[0.01cm]
$L_n(2)$ & $n=p$ or $p+1$, $2^p-1$ is Mersenne prime & 1&\cite{MAk.Rah} \\[0.01cm]
$L_n(2)$ & $n=9, 10, 11$ & 1 & \cite{Khos, binary} \\[0.01cm]
$L_3(7^n)$ & $n=2, 3, 4$ & 1& \cite{QXL}\\[0.01cm]
$L_3(9)$ & & 1& \cite{SS-2009}\\[0.01cm]
$L_6(3)$ & & 1& \cite{BAk(sub)}\\[0.01cm]
$L_6(5)$ & & 1 & \cite{ZhangL65}\\[0.01cm]
$U_3(q)$ & $\left|\pi\left(\frac{q^2-q+1}{d}\right)\right|=1$, $d=(3,q+1)$, $q>5$ & 1& \cite{degree} \\[0.01cm]
$U_3(5)$ & & 1 & \cite{ZS-2010} \\[0.01cm]
$U_4(2)$  & & 2 & \cite{moh, regular} \\[0.01cm]
$U_4(q)$ & $q= 3, 4, 5 , 7, 8, 17$ & 1 &\cite{BAk(sub), MAk.Rah, moh, atmost17, SZ-2008}\\[0.01cm]
$U_6(2)$ &  & 1 & \cite{Zhang-Shi-2011}\\[0.01cm]
${^2G}_2(q)$ & $\left|\pi(q \pm \sqrt{3q}+1)\right|=1, q=3^{2m+1},
m\geqslant1$
& 1 & \cite{degree} \\[0.01cm]
${\rm Sz}(q)$ & $q=2^{2n+1}\geqslant 8$ & 1& \cite{degree} \\[0.01cm]
$S_4(q)$ & $|\pi((q^2+1)/2)|=1$, $q\neq 3$ & 1& \cite{2-fold}\\[0.01cm]
$S_6(3)$  & & 2 & \cite{degree} \\[0.01cm]
$S_6(4)$ & $S_6(4)\cong O_7(4)$  & 1& \cite{M-2010} \\[0.01cm]
$S_6(5)$  & & 2 & \cite{2-fold} \\[0.01cm]
$S_{2n}(q)$ & $n=2^m\geqslant 2, \ 2\mid q, \
|\pi(q^n+1)|=1$, $(n, q)\neq(2, 2)$ &1&\cite{2-fold} \\[0.01cm]
& $S_{2n}(q)\cong O_{2n+1}(q)$ &   &  \\[0.01cm]
$S_{2m}(q)$ &
$m=2^f\geqslant 2$, \
$\left|\pi\left(\frac{q^m+1}{2}\right)\right|=1$,
\ $q$ \mbox{odd} &  2 &  \cite{2-fold}\\ [0.3cm]
$S_{2p}(3)$ &
$\left|\pi\left(\frac{3^p-1}{2}\right)\right|=1$,
\ $p$ \mbox{odd prime} &  2 &  \cite{2-fold}\\ [0.1cm]
$O_7(3)$  & & 2 & \cite{degree} \\[0.01cm]
$O_7(5)$  & & 2 & \cite{2-fold} \\[0.01cm]
$G$ & $G$ is a sporadic group & 1& \cite{degree} \\[0.1cm]
$G$ & $|G|\leqslant 10^8 , G\neq {\Bbb A}_{10}, U_4(2)$ & 1& \cite{SZ-2008}\\[0.01cm]
$G$ & $|\pi(G)|=4, G\neq {\Bbb A}_{10}$ & 1 & \cite{ZS-2009}\\[0.01cm]
$G$ &  $\pi_1(G)=\{2\}$ & 1 & \cite{M.Z(2008)}\\[0.01cm]
$G$ & $\pi(G)\subseteq \pi(17!), G\neq {\Bbb
A}_{10}, U_4(2)$ & 1 & \cite{atmost17}\\[0.01cm]
$G$ & $\pi(G)\subseteq \pi(29!), G\neq {\Bbb
A}_{10}, \ U_4(2), \ S_6(3), \ O_7(3)$ & 1 & \cite{moh}\\[0.3cm]
$O_{2m+1}(q)$ &
$m=2^f\geqslant 2$, \
$\left|\pi\left(\frac{q^m+1}{2}\right)\right|=1$,
\ $q$ \mbox{odd} &  2 &  \cite{2-fold}\\ [0.3cm]
$O_{2p+1}(3)$ &
$\left|\pi\left(\frac{3^p-1}{2}\right)\right|=1$,
\ $p$ \mbox{odd prime} &  2 &  \cite{2-fold}\\ [0.3cm]
\hline \end{tabular}
\end{center}

\vspace{0.5cm}

\noindent {\sc Majid Akbari}\\[0.2cm]
{\sc Department of Mathematics, Payame Noor University, Tehran, Iran.}\\[0.3cm]
{\sc   Xiaoyou Chen}\\[0.2cm]
{\sc College of Science, Henan University of Technology, $450001$, Zhengzhou, China.}\\[0.2cm]
{ E-mail address:}  {\tt cxy19800222@163.com}\\[0.3cm]
{\sc Faisal Hassani}\\[0.2cm]
{\sc Department of Mathematics, Payame Noor University, Tehran, Iran.}\\[0.3cm]
{\sc A. R. Moghaddamfar}\\[0.2cm]
{\sc Faculty of Mathematics, K. N. Toosi
University of Technology,
 P. O. Box $16315$--$1618$, Tehran, Iran.}\\[0.1cm]
{ E-mail addresses:}  {\tt
moghadam@kntu.ac.ir}, and {\tt moghadam@ipm.ir}\\[0.3cm]
\end{document}